\newcommand{\vecg}[1]{\boldsymbol{#1}}    
\newcommand{\dd}{\hspace{0.1cm} \mathrm{d}}
\newcommand{\norm}[1]{\| #1 \|}                 
\newcommand{\abs}[1]{| #1 |}
\newcommand{\RR}{\mathbb{R}}
\newcommand{\NN}{\mathbb N}
\newcommand{\parameterDomain}{\Xi}
\newcommand{\parameter}[1]{y_{#1}}
\newcommand{\parameterVector}{y}
\newcommand{\nPC}{Q}
\newcommand{\nFE}{N}
\newcommand{\iKL}{m}
\newcommand{\nKL}{M}
\newcommand{\spatialDomain}{D}
\newcommand{\spatialDomainBoundary}{\partial D}
\newcommand{\stochasticDomain}{\Omega}
\newcommand{\stochasticDomainValue}{\omega}
\newcommand{\sigmaAlgebra}{\mathcal F}
\newcommand{\probabilityMeasure}{\mathbb P}
\newcommand{\pTuple}{(\stochasticDomain,\sigmaAlgebra,\probabilityMeasure)}
\newcommand{\probabilityDensity}{\rho}
\newcommand{\stochasticDegree}{k}
\numberwithin{theorem}{section}
\numberwithin{equation}{section}
\newcommand{\TheTitle}{A Bramble-Pasciak conjugate gradient method for discrete Stokes equations with random viscosity} 
\newcommand{\ShortTitle}{A BPCG method for discrete Stokes equations with random viscosity}
\newcommand{\TheAuthors}{C. M\"uller, S. Ullmann, and J. Lang}
\headers{\ShortTitle}{\TheAuthors}
\title{{\TheTitle}\thanks{
This work is supported by the Excellence Initiative of the German
federal and state governments and the Graduate School of Computational
Engineering at Technische Universit\"at Darmstadt.}}
\author{
  Christopher M\"uller\thanks{Graduate School Computational Engineering, Technische Universit\"at Darmstadt, 
  Dolivostr. 15, 64293, Darmstadt
    (\email{cmueller@gsc.tu-darmstadt.de}, \email{ullmann@gsc.tu-darmstadt.de}).}
  \and
  Sebastian Ullmann\footnotemark[2]
  \and
  Jens Lang \thanks{Department of Mathematics, Technische Universit\"at Darmstadt, 
  Dolivostr. 15, 64293, Darmstadt (\email{lang@mathematik.tu-darmstadt.de})}
}
\DeclareMathOperator{\diag}{diag}
\DeclareMathOperator*{\essinf}{ess\,inf}
\DeclareMathOperator*{\esssup}{ess\,sup}
\begin{document}

\maketitle

\begin{abstract}
We study the iterative solution of linear systems of equations arising from stochastic Galerkin finite element discretizations of saddle point problems.~We focus on the Stokes model with random data parametrized by uniformly distributed random variables. We introduce a Bramble-Pasciak conjugate gradient method as a linear solver.~This method is associated with a block tri\-angular preconditioner which must be scaled using a properly chosen parameter.~We show how the existence requirements of such a conjugate gradient method can be met in our setting.~As a reference solver, we consider a standard MINRES method, which is restricted to symmetric preconditioning.~We analyze the performance of the two different solvers depending on relevant physical and numerical parameters by means of eigenvalue estimates.~For this purpose, we derive bounds for the eigenvalues of the relevant preconditioned sub-matrices.~We illustrate our findings using the flow in a driven cavity as a numerical test case, where the viscosity is given by a truncated Karhunen-Lo\`eve expansion of a random field.~In this example, a Bramble-Pasciak conjugate gradient method with a block triangular preconditioner converges faster than a MINRES method with a comparable block diagonal preconditioner in terms of iteration counts.
\end{abstract}

\begin{keywords}
  iterative solvers, PDEs with random data, Stokes flow, eigenvalues, preconditioning, stochastic Galerkin, 
  mixed finite elements, conjugate gradient method
\end{keywords}

\begin{AMS}
   35R60, 60H15, 60H35, 65N22, 65N30, 65F08, 65F10, 65F15
\end{AMS}

\section{Introduction}
\label{sec:intro}
We consider a stochastic Galerkin finite element (SGFE) method \cite{BabushkaEtAl2004,GunzburgerEtAl2014} for estimating statistical quantities in the context of Stokes problems with uncertain viscosity. SGFE methods assume a parametrization of the uncertain input in terms of a random vector with a given probability density. This allows a weak formulation of the governing equations in terms of an integral over both the spatial domain and the image domain of the random vector.~An SGFE method is then defined by a Galerkin projection of the weak equations onto a finite dimensional subspace, namely the tensor product of a finite element (FE) space for the spatial dimensions times a multivariate global polynomial space for the stochastic dimensions. This leads to a block structured system of coupled equations, where each block can be interpreted as a finite element discretization of a deterministic problem.

Using global polynomials to model the dependence of the solution on the random data can lead to exponential convergence rates with respect to the number of stochastic degrees of freedom \cite{BabuskaEtAl2007,BespalovEtAl2012}. If the necessary regularity assumptions are met, this can provide an advantage over more robust Monte Carlo-type sampling methods. Among global polynomial-based approaches, stochastic collocation methods \cite{BabuskaEtAl2007,GunzburgerEtAl2014} are currently the most prominent competitors to stochastic Galerkin (SG) methods. Stochastic collocation methods benefit from their ability to estimate statistical quantities by uncoupled solutions of deterministic problems, just like Monte Carlo methods. SG methods, on the other hand, provide the opportunity to derive reliable error estimators and adaptive refinement schemes for the deterministic and stochastic parameter domains by exploiting Galerkin orthogonality \cite{BespalovSilvester2016,EigelEtAl2014}.

The efficient solution of SG systems of equations remains a challenge:~While the block structured system can be decoupled for specific choices of basis functions in some cases \cite{BabushkaEtAl2004}, often a coupled system of equations needs to be solved. Iterative methods are usually employed for this task, because direct methods are not feasible for realistic problem sizes. The inherent ill-conditioning raises the question of efficient preconditioning.

The goal of this study is to derive an efficient preconditioned iterative method that takes advantage of the structural properties of the SGFE system of linear algebraic equations originating from a Stokes problem with random viscosity. Our research builds on results concerning preconditioned iterative solvers for partial differential equations with uncertain data, in particular elliptic problems \cite{PowellElman2009,RosseelVandewalle2010,EUllmann2010} and saddle point problems 
\cite{ErnstEtAl2009,PowellSilvester2012}.

The MINRES method \cite{PaigeSaunders1975} is a standard iterative solver for symmetric indefinite saddle point problems.~We propose a Bramble-Pasciak conjugate gradient (BPCG) method as a promising alternative. Such a method has been introduced in \cite{BramblePasciak1988} for deterministic symmetric saddle point problems.~An attractive feature of BPCG methods is that they allow the use of nonsymmetric preconditioners, which are not applicable within MINRES methods.~The BPCG algorithm, however, relies on a scaling parameter which has to be determined additionally. We derive a BPCG method for Stokes problems with random viscosity by transferring the approach of \cite{PetersEtAl2005} to a stochastic setting. In this context, the dependence of the iteration counts on stochastic discretization and model parameters is of particular importance. We focus on the dimensionality of the random parametrization, the degree of the stochastic polynomial approximation and the magnitude of the random variation. We show that the iteration counts of the proposed preconditioned solvers for the SGFE equations are largely independent on the discretization parameters.~Our numerical experiments confirm these results and reveal that the BPCG solver often converges faster than a standard preconditioned MINRES method in terms of iteration counts.

The outcome of our study is the description and analysis of a BPCG solver for SGFE discretizations of Stokes problems with uniform random viscosity. With some minor adjustments, the solver can be used for Stokes flow with lognormal random viscosity as well \cite{MuellerEtAl2017}. In principle, it is possible to transfer our method to related classes of problems by exchanging individual building blocks. Other potential applications include mixed diffusion problems with random coefficient \cite{ErnstEtAl2009} and constrained optimization problems \cite{AxelssonNeytcheva2003} where the data is uncertain.

The outline of the paper is as follows: We introduce the continuous setting for the Stokes problem with random viscosity in \cref{sec_conti_prob}. We model the viscosity random field in terms of a Karhunen-Lo\`eve expansion in \cref{sec_input_model} and formulate assumptions to ensure that the viscosity is uniformly positive, see \cref{sec_var_form}. In \cref{sec_SGFE_discretization}, we approximate the Stokes problem in finite dimensional spaces with a combination of Taylor-Hood finite elements and Legendre chaos polynomials. We derive a matrix representation of our problem and discuss preconditioning approaches in \cref{sec_preconditioning}. We consider existing strategies from the FE and SG literature as building blocks for our SGFE preconditioners and derive bounds for the eigenvalues of the preconditioned SGFE Laplacian and Schur complements in \cref{sec_ev_analysis}. In \cref{sec_iterative_solvers}, we discuss properties of the MINRES solver and state conditions under which a conjugate gradient (CG) method can be utilized to solve the SGFE problem. Based on the derived eigenvalue bounds, we predict the convergence behavior of the two iterative solvers with respect to different modeling and discretization parameters. The theoretic results and predictions are checked by means of numerical experiments in \cref{sec_numerical_examples}, where we also compare the iterative methods with respect to their performance. The work is eventually summarized and concluded in the final section.

\section{Continuous formulation}
\label{sec_conti_prob}
We establish the framework for the strong form of the Stokes problem in the spatial and stochastic domain.

Let $\spatialDomain \subset \RR^2$ be a bounded domain with sufficiently regular boundary 
$\spatialDomainBoundary$ and spatial coordinates $x = (x_1,x_2)^{\text{T}} \in \spatialDomain$. Further, 
let $\pTuple$ be a probability space, where $\stochasticDomain$ denotes 
the set of elementary events, $\sigmaAlgebra$ is a $\sigma$-algebra on $\stochasticDomain$ and 
$\probabilityMeasure:\sigmaAlgebra \rightarrow [0,1]$ is a probability measure. The Stokes equations describe 
the behavior of a vector-valued velocity field $u = (u_1,u_2)^{\text{T}}$ and a scalar pressure field $p$ 
subject to viscous and external forcing. 

Taking into account uncertainties in the input data, we model the viscosity as a random field
$\nu = \nu(x,\stochasticDomainValue),\, \nu : \spatialDomain \times \stochasticDomain \to \RR$. From a physical point of view, motivating a spatially varying viscosity is rather difficult because the viscosity is a property of the fluid which is subject to transport. Nevertheless, we use this input model to demonstrate the applicability of our methodology in this general setting. Further, note that the physically relevant case of a spatially constant random viscosity is included as a special case, see also \cite{PowellSilvester2012}. Since the uncertainty propagates through the model, the components of the solution are also defined in terms of random fields. Therefore, the strong form of the Stokes equations with uncertain viscosity reads: Find 
$u= u(x,\stochasticDomainValue)$ and $p=p(x,\stochasticDomainValue)$ such that, 
$\probabilityMeasure$-almost surely,
\begin{alignat*}{2}
-\nabla\cdot \big(\nu(x,\stochasticDomainValue) \,  \nabla  u(x,\stochasticDomainValue)\big) + 
\nabla p(x,\stochasticDomainValue)   &= f(x) &\quad& \mathrm{in} \hspace{0.2cm} \spatialDomain 
\times\stochasticDomain, \notag\\
\label{eq_continuous_stoch_problem}
\nabla \cdot u(x,\stochasticDomainValue)   &= 0 &\quad& \mathrm{in} \hspace{0.2cm} \spatialDomain
\times \stochasticDomain, \\
u(x,\stochasticDomainValue)  &= g(x) &\quad& \mathrm{on} \hspace{0.2cm} 
\spatialDomainBoundary \times \stochasticDomain.\notag 
\end{alignat*}
For simplicity, we assume that $f = (f_1,f_2)^{\text{T}}$ is a deterministic volume force and $g = (g_1,g_2)^{\text{T}}$ is 
deterministic Dirichlet boundary data.~Extending the model to stochastic forcing and boundary data would not introduce 
significant difficulties regarding analytical and numerical properties of the problem and is thus omitted.

\section{Input modeling}
\label{sec_input_model}
In the following, we introduce a Karhunen-Lo\`eve expansion (KLE) of the viscosity random field and derive a 
criterion to ensure uniform positivity.

For the viscosity description, we restrict ourselves to second-order random fields, in particular 
$\nu \in L^2(\stochasticDomain,L^2(\spatialDomain))$. Thereby, it is  possible to represent the random 
field $\nu(x,\stochasticDomainValue)$ as a KLE \cite[Theorem 5.28]{LordEtAl2014} of the form
\begin{equation}
\label{eq_inf_KL}
\nu(x,\stochasticDomainValue) = \nu_0(x) + \sigma_\nu \sum_{\iKL=1}^\infty \sqrt{\lambda_\iKL}
\nu_\iKL(x) \parameter{\iKL}(\stochasticDomainValue),
\end{equation}
where $\nu_0(x)$ is the mean field of the viscosity, 
i.e.~$\nu_0(x)=\int_{\stochasticDomain}\nu(x,\stochasticDomainValue) \dd 
\probabilityMeasure(\stochasticDomainValue)$ is 
the expected value of $\nu(x,\stochasticDomainValue)$. Further, the pairs 
$(\lambda_\iKL,\nu_\iKL)_{\iKL=1}^\infty$ are eigenvalues and eigenvectors of the integral operator associated with 
the covariance of the correlated random field. Finally, the stochastic parameters $\parameter{\iKL},\iKL=1,\dots, \infty$, 
are uncorrelated random variables with zero mean and unit variance. We assume that these random variables are 
stochastically independent. For further properties of the KLE, see 
\cite[sections 5.4 and~7.4]{LordEtAl2014}.

To show well-posedness of saddle point problems, it is required that the elliptic coefficient is positive and 
bounded. For simplicity, we state uniform positivity and boundedness using constants 
$\underline{\nu},\overline{\nu} >0$, independent of $x$ and $\stochasticDomainValue$, such that
\begin{equation}
\label{eq_assumption_viscosity}
0 < \underline{\nu} \leq \nu(x,\stochasticDomainValue) \leq \overline{\nu} < \infty \quad 
\text{a.e. in } \spatialDomain \times
\stochasticDomain.
\end{equation}  
In the following, we make assumptions on the components of the expansion \cref{eq_inf_KL} to guarantee 
that \cref{eq_assumption_viscosity} holds. First of all, 
we assume that the random variables are bounded and uniformly distributed in the infinite dimensional cube 
$\parameterDomain:=[-\sqrt{3},\sqrt{3}]^{\NN}$. We collect the random variables in a vector 
$\parameterVector:=(\parameter{\iKL}(\stochasticDomainValue))_{\iKL\in \NN},\,\parameterVector:\,\stochasticDomain \to \parameterDomain$, 
such that $\norm{\parameter{\iKL}}_{L^\infty(\stochasticDomain)} = \sqrt{3}$ for $\iKL \in \NN$.
Furthermore, we assume that the mean field $\nu_0(x)$ is a bounded positive function, i.e.~there exists a constant $\underline{\nu}_0:=\essinf_{x \in \spatialDomain} \nu_0(x)>0$ and a constant
$\overline{\nu}_0:=\esssup_{x \in \spatialDomain} \nu_0(x)<\infty$. Moreover, let 
$\sqrt{\lambda_\iKL}\nu_\iKL(x) \in L^\infty(\spatialDomain)$, $\chi_\iKL:=\norm{\sqrt{\lambda_\iKL}\nu_\iKL}_{L^\infty(\spatialDomain)}$ for all $m=1,\dots,\infty$, and assume that the sequence $(\chi_\iKL)_{\iKL} \in \ell^1(\NN)$, $\chi:=\norm{(\chi_\iKL)_{\iKL}}_{\ell^1(\NN)}$. From these assumptions and \cref{eq_inf_KL}, we derive
\begin{equation}
\label{eq_KL_estiamte}
\underline{\nu}_0 - \sqrt{3} \, \sigma_\nu \, \chi \leq \nu(x,\stochasticDomainValue) \leq 
\overline{\nu}_0 + \sqrt{3} \, \sigma_\nu \, \chi,
\end{equation}
for all $\stochasticDomainValue \in \stochasticDomain$. Uniform positivity of $\nu(x,\stochasticDomainValue)$ and consequently \cref{eq_assumption_viscosity} follow under the assumption
\begin{equation}
\label{eq_assumption_KL}
\underline{\nu}_0 > \sqrt{3} \, \sigma_\nu \, \chi
\end{equation}
with $\underline{\nu} = \underline{\nu}_0 - \sqrt{3} \, \sigma_\nu \, \chi >0$ and $\overline{\nu} = \overline{\nu}_0 + \sqrt{3} \, \sigma_\nu \, \chi$. Referring to \cref{eq_assumption_KL}, it is thus not possible to model an input viscosity with an arbitrarily large variance for any given mean field.

When the series representation is used in a discrete setting in \cref{sec_SGFE_discretization}, the infinite sum in 
\cref{eq_inf_KL} is truncated after $\nKL$ terms, i.e.
\begin{equation}
\label{eq_truncated_KL}
\nu(x,\stochasticDomainValue) \approx \nu_\nKL(x,\stochasticDomainValue) = \nu_0(x) + 
\sigma_\nu \sum_{\iKL=1}^\nKL \sqrt{\lambda_\iKL}
\nu_\iKL(x) \parameter{\iKL}(\stochasticDomainValue).
\end{equation}
For a given $\nKL$, the truncation error depends on the correlation length and the smoothness of the covariance operator \cite[sections 5.4]{LordEtAl2014}. Assumption \cref{eq_assumption_KL}, stated for the infinite case, naturally also holds for the finite one.

Although we use the KLE \cref{eq_inf_KL} to describe the uncertain input, the methods presented in 
this work are not limited to this representation. They can be applied to any parametrized random field that 
fulfills the positivity and boundedness constraint \cref{eq_assumption_viscosity}.
We will formulate our problem with respect to the deterministic range space 
$\parameterDomain$ instead of $\stochasticDomain$. In the truncated case, the range space is the $\nKL$-dimensional cube 
$\parameterDomain^\nKL:=\parameterDomain_1 \times \ldots \times \parameterDomain_\nKL$, $\parameterDomain_\iKL:=[-\sqrt{3},\sqrt{3}]$.

Due to the assumed stochastic independence of the random variables, the uniform measure on the infinite dimensional cube can 
be expressed by a tensor product of univariate uniform measures, i.e.\ $\probabilityDensity:=\bigotimes_{\iKL=1}^{\infty} \probabilityDensity_\iKL$ with $\probabilityDensity_\iKL(\mathrm{d} \parameter{\iKL}) = \mathrm{d} \parameter{\iKL}/(2\sqrt{3})$.

As a consequence of the Doob-Dynkin lemma \cite[Lemma 2.1.2]{Oksendal1998}, the velocity and pressure random fields can be 
pa\-ra\-me\-tri\-zed with the vector $\parameterVector$ as well. 
\section{Variational formulation}
\label{sec_var_form}
We establish a variational formulation of the Stokes problem with random data and discuss existence and uniqueness 
of weak solutions.

In order to formulate the weak equations, we use Bochner spaces 
$L^2_{\probabilityDensity}(\parameterDomain;X)$, 
where $X$ is a given Banach space.~We consider spaces consisting of all equivalence classes of strongly measurable functions $v \, : \, \parameterDomain \to X$ with norm 
\begin{equation}
\label{eq_Bochner_norm}
\norm{v}_{L^2_{\probabilityDensity}(\parameterDomain;X)} = 
\left(\int_{\parameterDomain} \norm{v(\cdot,\parameterVector)}_X^2 \, \probabilityDensity(\mathrm{d}\parameterVector) \right)^{1/2} 
< \infty.
\end{equation}
As $X$ will always be a separable Hilbert space in our work, the Bochner spaces are isomorphic to 
the corresponding tensor product spaces $L^2_{\probabilityDensity}(\parameterDomain) \otimes X$ with 
norms $\norm{\cdot}_{L^2_{\probabilityDensity}(\parameterDomain) \otimes X} := 
\norm{\cdot}_{L^2_{\probabilityDensity}(\parameterDomain;X)}$.

We mark function spaces containing vector-valued elements with bold symbols. Recalling the function spaces used for enclosed Stokes flow with deterministic data \cite[section 3.2]{John2016}, we utilize $\vecg{V}_0:=\{v\in \vecg{H}^1(\spatialDomain) \, | \, v_|{_{\spatialDomainBoundary}} = 0 \}$ for the vector-valued velocity field. As a corresponding norm, we use $\norm{v}_{\vecg{V}_0}:= \norm{\nabla v}_{\vecg{L}^2(\spatialDomain)}$ 
for $v \in \vecg{V}_0$. For the scalar pressure field, we use the Sobolev space 
$W_0:=\{q \in L^2(\spatialDomain) \, | \, \smallint_\spatialDomain q(x) \dd x = 0 \}$ with norm 
$\norm{q}_{W_0}:= \norm{q}_{L^2(\spatialDomain)}$ for $q \in W_0$. The inhomogeneous Dirichlet conditions are homogenized using the function $u_0$ which is a lifting of the boundary data $g$ in the sense of the trace theorem.

Now, we introduce the tensor product spaces $\vecg{\mathcal{V}}_0 := L_{\probabilityDensity}^2(\parameterDomain) \otimes \vecg{V}_0$ and 
$\mathcal{W}_0 := L_{\probabilityDensity}^2(\parameterDomain) \otimes W_0$, 
which are denoted by calligraphic letters and which are Hilbert spaces as well. Further, we define a variational 
formulation on these product spaces: Find $(u,p) \in \vecg{\mathcal{V}}_0 \times \mathcal{W}_0$ such that 
\begin{equation}
\label{eq_weak_formulation_ps}
\begin{aligned}
\langle a(u,v) \rangle + \langle b(v,p) \rangle &= 
\langle l(v) \rangle, &\quad& 
\forall v \in \vecg{\mathcal{V}}_0, \\
\langle b(u,q) \rangle &= \langle t(q) \rangle, &\quad& \forall q \in \mathcal{W}_0,
\end{aligned}
\end{equation}
where the bilinear forms and linear functionals are given by
\begin{alignat*}{3}
\langle a(u,v) \rangle &:= \int_\parameterDomain \int_{\spatialDomain}\nu(x,\parameterVector) \, \nabla u(x,\parameterVector) \cdot \nabla v(x,\parameterVector)  \dd x \, \probabilityDensity(\mathrm{d}\parameterVector), &\quad& u,v \in \vecg{\mathcal{V}}_0, \\
\langle b(v,q) \rangle &:= - \int_\parameterDomain \int_{\spatialDomain} q(x,\parameterVector) \, \nabla \cdot v(x,\parameterVector) \dd x\, \probabilityDensity(\mathrm{d}\parameterVector), &\quad& q\in \mathcal{W}_0, \, v \in  \vecg{\mathcal{V}}_0, \\
\langle l(v) \rangle &= \int_\parameterDomain\int_{\spatialDomain} f(x) \cdot v(x,\parameterVector) \dd x \, \probabilityDensity(\mathrm{d}\parameterVector) - \langle a(u_0,v)\rangle, &\quad& v \in \vecg{\mathcal{V}}_0, \\
\langle t(q) \rangle &= - \langle b(u_0,q) \rangle, &\quad& q \in \mathcal{W}_0.
\end{alignat*}
Well-posedness of the weak formulation \cref{eq_weak_formulation_ps} can be established using the theory of mixed variational problems in \cite{BrezziFortin1991}, see also \cite[Theorems 2.10 and 2.11]{Mueller2018}. The line of argument is similar when the mixed diffusion problem with uncertain input data is considered, see \cite{BespalovEtAl2012,ErnstEtAl2009}.

\section{Stochastic Galerkin finite element discretization}
\label{sec_SGFE_discretization}
We introduce the discretizations of the tensor product spaces and establish the structure of the emerging system of equations.

\subsection{Discrete subspaces}
%
We use FE spaces 
$\vecg{V}_0^h \subset \vecg{V}_0$ and $W_0^h \subset W_0$ to discretize the spatial domain, where $h$ 
denotes the spatial mesh size. In particular, we choose stable Taylor-Hood $P_2/P_1$ elements on a regular 
triangulation \cite[section 3.6.2]{John2016}. They consist of $\nFE_u$ continuous piecewise quadratic basis functions for the velocity space and $\nFE_p$ continuous piecewise linear basis functions for the pressure space. The total number of FE degrees of freedom is denoted by $\nFE:=(\nFE_u + \nFE_p)$.

We truncate the KLE \cref{eq_inf_KL} after $\nKL$ terms so that the random parameter domain reduces to the finite dimensional cube $\parameterDomain^\nKL$, see \cref{eq_truncated_KL}. We define a stochastic Galerkin space $S^{\stochasticDegree} \subset L_{\probabilityDensity}^2(\parameterDomain^\nKL)$ by a complete $\nKL$-variate Legendre chaos of total degree $k$. This is the 
appropriate basis for the chosen uniform input distribution according to the Wiener-Askey scheme 
\cite{XiuKarniadakis2002b}, because the basis functions are orthonormal with respect to the measure $ \probabilityDensity$.~The associated number of degrees of freedom is $\nPC:=\binom{\nKL + \stochasticDegree}{\stochasticDegree}$.

Finally, we define the SGFE spaces $\vecg{\mathcal{V}}_0^{\stochasticDegree h}:= S^{\stochasticDegree} \otimes \vecg{V}_0^h \subset \vecg{\mathcal{V}}_0$ and $\mathcal{W}_0^{\stochasticDegree h}:= S^{\stochasticDegree} \otimes W_0^h \subset \mathcal{W}_0$. The emerging discrete system of equations is of size $\text{dim}(\vecg{\mathcal{V}}_0^{\stochasticDegree h} \times \mathcal{W}_0^{\stochasticDegree h}) =  \nPC\nFE$. Existence and uniqueness of a solution to the fully-discrete variational problem on the SGFE spaces again follows from the theory of mixed finite element problems \cite[Theorem II.1.1]{BrezziFortin1991}. For the mixed  diffusion equations with uncertain input data, the analysis can be found in \cite[Lemma 3.1]{BespalovEtAl2012}.

\subsection{Matrix formulation}
We represent the solution fields and test functions in the finite element and stochastic Galerkin bases and insert them into the weak formulation \cref{eq_weak_formulation_ps}. Together with the truncated KLE \cref{eq_truncated_KL}, this results in a matrix equation of the form
\begin{equation}
\label{eq_block_matrix_form}
\mathcal{C} \, \vecg{z} = \vecg{b}, \quad \mathcal{C} \in \RR^{\nPC \nFE \times \nPC \nFE},\quad\text{where}\quad
\mathcal{C}:= \begin{bmatrix} \mathcal{A} & \mathcal{B}^{\text{T}} \\ \mathcal{B} & 0 \end{bmatrix}, \quad 
\vecg{z}:= \begin{bmatrix} \boldsymbol{u} \\ \boldsymbol{p} \end{bmatrix}, \quad 
\vecg{b}:= \begin{bmatrix} \vecg{f} \\ \vecg{t} \end{bmatrix}.
\end{equation}
Here, $\boldsymbol{u} \in \RR^{\nPC \nFE_u}$ and $\boldsymbol{p} \in \RR^{\nPC \nFE_p}$ are the
discrete velocity and pressure coefficient vectors, respectively. In order to distinguish objects on different spaces, we denote the matrices on the product spaces with calligraphic 
capital letters and the matrices on the FE and SG spaces with
standard capital letters. The sub-matrices are given by 
\begin{alignat}{5}
\label{eq_SGFEM_diffusion_matrix}
\mathcal{A} &= I \otimes A_0 + \sum_{\iKL=1}^{\nKL} G_{\iKL} \otimes A_{\iKL} \, &\in \RR^{\nPC  \nFE_u \times \nPC \nFE_u},&\qquad& \boldsymbol{f} &= \boldsymbol{g}_0 \otimes \boldsymbol{w} \, &\in \RR^{\nPC \nFE_u},\\
\label{eq_SGFEM_gradient_matrix}
\mathcal{B} &= I \otimes B &\in \RR^{\nPC \nFE_p \times \nPC \nFE_u},&\qquad& \boldsymbol{t} &= \boldsymbol{g}_0 \otimes \boldsymbol{d} &\in \RR^{\nPC \nFE_p},
\end{alignat}
where the identity matrix $I \in \RR^{\nPC \times \nPC}$ is a consequence of the orthonormality of the SG basis. 
The matrix $A_0 \in \RR^{\nFE_u \times \nFE_u}$ results from weighting the FE velocity Laplacian with the mean viscosity field 
$\nu_0(x)$. In the same manner, the matrices $A_\iKL \in \RR^{\nFE_u \times \nFE_u}$, 
$\iKL=1,\dots,\nKL$ are FE velocity Laplacians weighted with the fluctuation parts $\sigma_\nu \sqrt{\lambda_\iKL}
\nu_\iKL(x)$, $\iKL=1,\dots,\nKL$, of \cref{eq_truncated_KL}. Further, the entries of the SG matrices $G_\iKL \in \RR^{\nPC \times \nPC}$ are expectations of products of two SG basis functions and one random variable $y_\iKL$, $\iKL=1,\dots,\nKL$. 
Recalling the structure of the KLE in \cref{eq_truncated_KL}, we notice that the 
SGFE Laplacian $\mathcal{A}$ in \cref{eq_SGFEM_diffusion_matrix} is similarly composed of a mean part -- 
namely $I \otimes A_0$ -- and a sum representing fluctuations around it.

\section{Preconditioning}
\label{sec_preconditioning}
In order to improve the condition of the involved operators and accelerate the convergence of iterative solvers, 
we discuss different preconditioning strategies.~We use existing approaches from the FE and SG literature as building blocks for our preconditioners and restate corresponding properties and results.

The SGFE matrix  $\mathcal{C} \in \RR^{\nPC\nFE\times\nPC\nFE}$ in 
\cref{eq_block_matrix_form} has a symmetric saddle point structure, like its deterministic FE 
counterpart of size $\nFE\times\nFE$, see \cite[section 3.5]{ElmanEtAl2014}. Due to this structural analogy, it seems 
natural to transfer existing methods for deterministic saddle point problems to a stochastic context. This approach is also followed for other SGFE problems such as mixed diffusion \cite{ErnstEtAl2009} 
and the Navier-Stokes equations \cite{PowellSilvester2012}.

Introducing the Schur complement $\mathcal{S}:= -\mathcal{B}  \mathcal{A}^{-1} \mathcal{B}^{\text{T}}$, we can 
formulate the following factorizations of the SGFE system matrix $\mathcal{C}$:
\begin{equation}
\label{eq_block_factorizations}
\begin{bmatrix} \mathcal{A} & \mathcal{B}^{\text{T}} \\ \mathcal{B} & 0 \end{bmatrix} =
\begin{bmatrix} I & 0 \\ \mathcal{B A}^{-1} & I \end{bmatrix}
\begin{bmatrix} \mathcal{A} & 0 \\ 0 & \mathcal{S} \end{bmatrix}
\begin{bmatrix} I & \mathcal{A}^{-1} \mathcal{B}^{\text{T}} \\ 0 & I \end{bmatrix}
= \begin{bmatrix} \mathcal{A} & 0 \\ \mathcal{B}  & \mathcal{S} \end{bmatrix}
\begin{bmatrix} I & \mathcal{A}^{-1} \mathcal{B}^{\text{T}} \\ 0 & I \end{bmatrix}.
\end{equation}
The SGFE Laplacian $\mathcal{A}$ is positive definite due to the uniformly positive bounds on the viscosity field in \cref{eq_assumption_viscosity}, see also \cite[Theorem 3.3]{Mueller2018}. The congruence 
transform in \cref{eq_block_factorizations} then implies that the discrete SGFE problem is highly indefinite.

As a starting point for constructing preconditioners, we consider block diagonal and block triangular 
matrix structures motivated by the factorizations \cref{eq_block_factorizations}.~They are established 
approaches in the context of saddle point problems, see \cite{BenziEtAl2005}. In the setting of this work, such preconditioners rely on appropriate approximations of the SGFE matrices $\mathcal{A}$ and $\mathcal{S}$. These approximations are denoted by $\widetilde{\mathcal{A}}$ and $\widetilde{\mathcal{S}}$ in the following, respectively. The generic preconditioners are of the form
\begin{equation}
\label{eq_precon_structure}
\mathcal{P}_{\text{diag}} = \begin{bmatrix} \widetilde{\mathcal{A}} & 0 \\ 0 & \widetilde{\mathcal{S}}\end{bmatrix}, 
\quad
\mathcal{P}_{\text{tri}} = \begin{bmatrix} \widetilde{\mathcal{A}} & 0 \\ 
\mathcal{B} & \widetilde{\mathcal{S}}\end{bmatrix}.
\end{equation}
In addition to \cref{eq_precon_structure}, we also make a structural simplification: 
We choose the SGFE preconditioners to be Kronecker products of one SG and one FE 
matrix (see also \cite{ErnstEtAl2009,PowellElman2009,PowellSilvester2012}):
\begin{equation}
\label{eq_precon_ansatz_diff}
\widetilde{\mathcal{A}} := \widetilde{G}_A \otimes \widetilde{A}, \quad 
\widetilde{\mathcal{S}} := \widetilde{G}_S \otimes \widetilde{S},
\end{equation}
where $\widetilde{A} \in \RR^{\nFE_u \times \nFE_u}$ and 
$\widetilde{S}\in \RR^{\nFE_p \times \nFE_p}$ shall be approximations of the FE Laplacian and Schur 
complement, respectively. The matrices $\widetilde{G}_A$ and $\widetilde{G}_S$ are approximations of the SG 
matrices. 
When evaluating $\widetilde{\mathcal{A}}^{-1} \mathcal{A}$, one sees that $\widetilde{G}_A$ only acts on the SG matrices $I$ and $G_\iKL$, and $\widetilde{A}$ only acts on the FE matrices $A_0$ and $A_\iKL$, $\iKL=1,\dots,\nKL$. The eigenvalue 
analysis in \cref{subsec_precon_Schur} reveals that this works similarly for the preconditioned SGFE 
Schur complement.

Having fixed the structures in \cref{eq_precon_structure,eq_precon_ansatz_diff}, the preconditioning task is reduced 
to choosing appropriate $\widetilde{A},\widetilde{S},\widetilde{G}_A$ and $\widetilde{G}_S$. 
We do this step by step using established preconditioners from the FE and SG literature.

\subsection{Finite element matrices}
\label{subsec_precon_fe_matrices}
For the discrete Stokes equations with deterministic data, it is known that one multigrid V-cycle, denoted by 
$\widetilde{A}_{\text{mg}}$ in the following, is spectrally equivalent to the FE Laplacian 
$A$, see e.g.~\cite[section 2.5]{ElmanEtAl2014}. This means, there exist positive constants 
$\delta$ and $\Delta$, independent of $h$, such that
\begin{equation}
\label{eq_upper_bound_Diff_approxDiff}
\delta \leq \frac{\vecg{v}^{\text{T}}A\vecg{v}}{\vecg{v}^{\text{T}}\widetilde{A}_{\text{mg}}\vecg{v}} \leq \Delta, \quad
\forall \vecg{v} \in \RR^{\nFE_u}\backslash \{\vecg{0}\}.
\end{equation}
As $A_0$ and $A_\iKL$, $\iKL=1,\dots,\nKL$ are FE Laplacians weighted with $\nu_0(x)$ and $\sigma_\nu \sqrt{\lambda_\iKL}\nu_\iKL(x)$, we can deduce 
\begin{equation}
\label{eq_mean_Diff_Diff}
0 < \underline{\nu}_0 \leq \frac{\vecg{v}^{\text{T}}A_0\vecg{v}}
{\vecg{v}^{\text{T}}A\vecg{v}} \leq \overline{\nu}_0, \quad 
- \sigma_\nu \chi_\iKL  \leq 
\frac{\vecg{v}^{\text{T}}A_\iKL \vecg{v}}{\vecg{v}^{\text{T}}A \vecg{v}} 
\leq  \sigma_\nu \chi_\iKL, 
\quad
\forall \vecg{v} \in \RR^{\nFE_u}\backslash \{\vecg{0}\},
\end{equation}
from the assumptions made in \cref{sec_input_model}. A combination of 
\cref{eq_upper_bound_Diff_approxDiff,eq_mean_Diff_Diff} then yields
\begin{equation}
\label{eq_upper_bound_mean_Diff_approxDiff}
\underline{\nu}_0 \, \delta \leq \frac{\vecg{v}^{\text{T}}A_0\vecg{v}}
{\vecg{v}^{\text{T}} \widetilde{A}_{\text{mg}} \vecg{v}} \leq \overline{\nu}_0 \, \Delta, \quad
- \sigma_\nu \, \chi_\iKL \, \Delta \leq \frac{\vecg{v}^\text{T} A_m \vecg{v}}{\vecg{v}^\text{T} 
\widetilde{A}_{\text{mg}} \vecg{v}} \leq  \sigma_\nu \, \chi_\iKL \, \Delta, \quad
\forall \vecg{v} \in \RR^{\nFE_u}\backslash \{\vecg{0}\}.
\end{equation}
By the help of the bounds \cref{eq_upper_bound_mean_Diff_approxDiff} we now know 
how the multigrid preconditioner effects the spectrum of the FE matrices contained in the SGFE Laplacian 
$\mathcal{A}$. In particular, we note that the mesh size $h$ does not appear in the bounds.

For the Schur complement, we again start from an established result for enclosed Stokes flow: The 
pressure mass matrix $M_p$ is spectrally equivalent to the negative FE Schur complement 
\cite[Theorem 3.22]{ElmanEtAl2014}:
\begin{equation}
\label{eq_lower_bound_Schur_Mass}
\gamma^2 \leq \frac{\vecg{q}^{\text{T}}BA^{-1}B^{\text{T}}\vecg{q}}{\vecg{q}^{\text{T}}M_p \, \vecg{q}}
\leq 1, \quad
\forall \vecg{q} \in \RR^{\nFE_p}\backslash \{\vecg{0},\vecg{1}\}.
\end{equation}
By writing $\backslash\{\vecg{0},\vecg{1}\}$ we mean that all constant vectors $\vecg{q}$ are excluded. This is necessary because they 
are in the nullspace of $B^{\text{T}}$ for enclosed flow \cite[Section 3.3]{ElmanEtAl2014}. Further, $\gamma$ is the inf-sup constant of the chosen 
mixed FE approximation.

The pressure mass matrix is spectrally equivalent to its diagonal $D_p := \diag(M_p)$,
\begin{equation}
\label{eq_bound_Mass_diagMass}
\theta^2 \leq \frac{\vecg{q}^{\text{T}}M_p\vecg{q}}{\vecg{q}^{\text{T}}D_p \, \vecg{q}} \leq \Theta^2, \quad
\forall \vecg{q} \in \RR^{\nFE_p}\backslash \{\vecg{0}\},
\end{equation}
with positive constants $\theta^2$ and $\Theta^2$ depending only on the degree and type of finite elements 
applied \cite{Wathen1991}. For linear basis functions on triangles, as used for the pressure 
approximation in this work, $\theta^2 = \frac12$ and $\Theta^2=2$. Combining 
\cref{eq_lower_bound_Schur_Mass} and \cref{eq_bound_Mass_diagMass} results in the relation
\begin{equation}
\label{eq_bound_Schur_diagMass}
\theta^2\, \gamma^2 \leq \frac{\vecg{q}^{\text{T}}BA^{-1}B^{\text{T}}\vecg{q}}{\vecg{q}^{\text{T}}
D_p \, \vecg{q}} 
\leq \Theta^2, \quad\forall \vecg{q} \in \RR^{\nFE_p}\backslash \{\vecg{0},\vecg{1}\}.
\end{equation}
The bound \cref{eq_bound_Schur_diagMass} reveals that preconditioning the negative FE Schur complement with 
the diagonal of the pressure mass matrix results in eigenvalue bounds independent of $h$. This result 
together with \cref{eq_upper_bound_mean_Diff_approxDiff} is the 
starting point for analyzing the parameter dependence of the eigenvalues of the preconditioned SGFE matrices in \cref{sec_ev_analysis}.

Besides their spectral properties, the considered FE preconditioners are also attractive from a 
computational point of view as they can be applied with linear complexity. For the mentioned reasons, we use 
the multigrid V-cycle and the diagonal of the pressure mass matrix as FE building blocks for our SGFE 
preconditioners \cref{eq_precon_ansatz_diff} such that $\widetilde{A}: = \widetilde{A}_{\text{mg}}$ and $\widetilde{S}: = D_p$.

\subsection{Stochastic Galerkin matrices}
\label{subsec_precon_sg_matrices}
Spectral properties and complexity considerations are also the basis for choosing SG preconditioners. Independence 
of the spectral bounds from the discretization parameters $\stochasticDegree$ and 
$\nKL$ is particularly desired. Because uniform random variables are used to model the uncertain viscosity, this independence comes 
for free: The eigenvalues of $G_\iKL$, $\iKL=1,\dots,\nKL$, are bounded by the support of the 
random variables independently of the chaos degree $k$ \cite[Theorem 9.62 and Corollary 
9.65]{LordEtAl2014}. Furthermore, the number of terms kept in the truncated KLE does not influence the 
location of these eigenvalues but only their multiplicities \cite[Corollary 9.64]{LordEtAl2014}. Using these 
results, we can state the bounds 
\begin{equation}
\label{ev_bounds_G_m}
-\sqrt{3} \leq \frac{\vecg{a}^\text{T} G_\iKL \vecg{a}}{\vecg{a}^\text{T}\vecg{a}}
\leq \sqrt{3}, \quad \forall \vecg{a} \in \RR^{Q} \backslash \{\vecg{0}\},\quad \iKL \in \NN.
\end{equation}
Preconditioners for SGFE matrices often rely on the so-called mean-based approximation~
\cite{PowellElman2009}. As the name suggests, it approximates the mean part of the SGFE problem. In the setting of this work, the associated mean SG matrix is the identity. Using the identity as an SG preconditioner works well for SG matrices originating from uniform random variables \cite{PowellElman2009,PowellSilvester2012} and is often the most efficient 
choice.~This is a conclusion drawn in \cite{RosseelVandewalle2010}, where a more elaborate comparison of 
different SGFE preconditioners is conducted. Thus, we also rely on the mean-based 
approximation in our work, leading to the definition $\widetilde{G}_A = \widetilde{G}_S = I$.

When the fluctuation parts of the SGFE problem become more important, e.g.~when the ratio 
$\sigma_\nu/\nu_0(x)$ increases, using only the mean information for the preconditioner might be 
insufficient. Then, one can resort to approaches which rely on higher moment information, see 
e.g.~\cite{EUllmann2010}.~Although such methods could be implemented in our framework, 
they are computationally more expensive. As the size of the fluctuations 
is limited by \eqref{eq_assumption_KL} and the mean-based preconditioner is very cheap to apply, we only use this 
approach in our work.

\section{Analysis of the preconditioned SGFE matrices}
\label{sec_ev_analysis}
We derive bounds on the eigenvalues of three different matrices: the preconditioned SGFE Laplacian, the preconditioned SGFE Schur complement and the preconditioned approximate SGFE Schur complement.~These bounds can be used as ingredients to construct eigenvalue bounds for the block diagonal and block triangular preconditioned system matrix utilizing  results from the theory of saddle point problems.

As FE preconditioners, we decided to use one multigrid V-cycle $\widetilde{A}=\widetilde{A}_{\text{mg}}$ and 
the diagonal of the pressure mass matrix $\widetilde{S}=D_p$. To precondition the SG matrices, we use 
the approximation $\widetilde{G}_A=\widetilde{G}_S=I$.
In the following, we consider two preconditioners with block structures according to \cref{eq_precon_structure} and 
identical building blocks:
\begin{equation}
\label{eq_block_precon}
\mathcal{P}_1 = \begin{bmatrix} \widetilde{\mathcal{A}}_{\text{mg}} & 0 \\
0 & \widetilde{\mathcal{S}}_p \end{bmatrix}, 
\quad
\mathcal{P}_2 = \begin{bmatrix} a\,\widetilde{\mathcal{A}}_{\text{mg}} & 0 \\ 
\mathcal{B} & -\widetilde{\mathcal{S}}_p \end{bmatrix},
\quad\text{with}\quad
\widetilde{\mathcal{A}}_{\text{mg}}:= I \otimes \widetilde{A}_{\text{mg}},\quad
\widetilde{\mathcal{S}}_p:= I \otimes D_p.
\end{equation}
The scaling parameter $a \in \RR$ and the negative sign of the Schur complement approximation are additional manipulations to the preconditioner $\mathcal{P}_2$ in \cref{eq_block_precon}. Although it is not immediately obvious, they have beneficial effects on the spectrum of the respective preconditioned system matrix. More details are given in 
\cref{subsec_is_bpcg}.

\subsection{The preconditioned SGFE Laplacian}
The eigenvalue estimates for the preconditioned SGFE Laplacian $\widetilde{\mathcal{A}}_{\text{mg}}^{-1} \, \mathcal{A}$ 
can be derived as in \cite{ErnstEtAl2009}, where the procedure is carried 
out for a preconditioned SGFE matrix in the context of a mixed diffusion problem with 
uncertain coefficient. The main 
difference between our approach and the one in \cite[Lemma 4.6]{ErnstEtAl2009} is that we use a 
product estimate of the Rayleigh quotient similar to \cite{Pultarova2015} to 
derive a lower bound for the eigenvalues of the preconditioned SGFE Laplacian. This bound is tighter than the one 
we would get by following the steps in \cite{ErnstEtAl2009}.
\begin{lemma}
\label{lemma_eigenvalues_precon_diff}
Let the matrices $\mathcal{A}$ and $\widetilde{\mathcal{A}}_{\text{mg}}$ be defined as in 
\cref{eq_SGFEM_diffusion_matrix} and \cref{eq_block_precon}, respectively. Then, the estimate
\begin{align}
\label{eq_ev_bounds_SGFE_diff_approx_diff}
\hat{\delta} \leq \frac{\vecg{v}^{\text{T}}\mathcal{A} \, \vecg{v}}
{\vecg{v}^{\text{T}} \widetilde{\mathcal{A}}_{\text{mg}} \, \vecg{v}} \leq \hat{\Delta}, \quad 
\forall \vecg{v} \in \RR^{\nPC \nFE_u} \backslash \{\vecg{0}\},
\end{align}
holds with $\hat{\delta} := (\underline{\nu}_0 - \sqrt{3} \,\sigma_\nu \,
\chi)\delta$ and $\hat{\Delta} :=(\overline{\nu}_0 + \sqrt{3}\,\sigma_\nu \,\chi)\Delta$.
\end{lemma}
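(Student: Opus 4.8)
The plan is to write $\mathcal{A} = \mathcal{A}_0 + \sum_{\iKL=1}^\nKL \mathcal{A}_\iKL$, where $\mathcal{A}_0 := I \otimes A_0$ is the ``mean part'' and $\mathcal{A}_\iKL := G_\iKL \otimes A_\iKL$, $\iKL=1,\dots,\nKL$, are the fluctuation blocks, and to estimate the Rayleigh quotient $\vecg{v}^{\text{T}}\mathcal{A}\vecg{v} / \vecg{v}^{\text{T}}\widetilde{\mathcal{A}}_{\text{mg}}\vecg{v}$ term by term, since the denominator $\widetilde{\mathcal{A}}_{\text{mg}} = I \otimes \widetilde{A}_{\text{mg}}$ is positive definite. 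For the mean part I would use properties of the Kronecker product: for any $\vecg{v} \in \RR^{\nPC\nFE_u}$, the quotient $\vecg{v}^{\text{T}}(I \otimes A_0)\vecg{v} / \vecg{v}^{\text{T}}(I \otimes \widetilde{A}_{\text{mg}})\vecg{v}$ is bounded above and below by the extreme generalized eigenvalues of the pencil $(A_0, \widetilde{A}_{\text{mg}})$, and by \cref{eq_upper_bound_mean_Diff_approxDiff} these lie in $[\underline{\nu}_0\,\delta,\ \overline{\nu}_0\,\Delta]$. This handles the ``central'' contribution.

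For the fluctuation blocks the key device is the product estimate of the Rayleigh quotient alluded to in the text (as in \cite{Pultarova2015}): for each $\iKL$, I would bound $|\vecg{v}^{\text{T}}(G_\iKL \otimes A_\iKL)\vecg{v}|$ by splitting it as a product of a ``stochastic'' factor controlled by $G_\iKL$ and a ``spatial'' factor controlled by $A_\iKL$. Concretely, using the mixed-product property $G_\iKL \otimes A_\iKL = (G_\iKL \otimes I)(I \otimes A_\iKL)$ together with the spectral bounds \cref{ev_bounds_G_m} on $G_\iKL$ (namely $\|G_\iKL\|_2 \le \sqrt{3}$, independent of $\stochasticDegree$) and \cref{eq_upper_bound_mean_Diff_approxDiff} on $A_\iKL$ relative to $\widetilde{A}_{\text{mg}}$ (namely $|\vecg{w}^{\text{T}}A_\iKL\vecg{w}| \le \sigma_\nu \chi_\iKL \Delta\, \vecg{w}^{\text{T}}\widetilde{A}_{\text{mg}}\vecg{w}$), one obtains
\begin{equation*}
\Big| \vecg{v}^{\text{T}}(G_\iKL \otimes A_\iKL)\vecg{v} \Big| \leq \sqrt{3}\,\sigma_\nu\,\chi_\iKL\,\Delta\ \vecg{v}^{\text{T}}(I \otimes \widetilde{A}_{\text{mg}})\vecg{v}.
\end{equation*}
Summing over $\iKL=1,\dots,\nKL$ and using $\sum_\iKL \chi_\iKL \le \chi$ then gives that the total fluctuation contribution to the Rayleigh quotient is bounded in modulus by $\sqrt{3}\,\sigma_\nu\,\chi\,\Delta$. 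Here one must be a little careful that the natural upper bound from the mean part uses $\Delta$ while one might hope the lower bound uses only $\delta$; the cleanest route is to bound the fluctuation sum symmetrically by $\sqrt{3}\,\sigma_\nu\,\chi\,\Delta$ on both sides, or — to get the sharper constants $\hat\delta,\hat\Delta$ as stated — to combine the per-block estimates directly against $\widetilde{A}_{\text{mg}}$ so that the $\delta$ and $\Delta$ appear exactly as in \cref{eq_upper_bound_mean_Diff_approxDiff}.

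Adding the mean and fluctuation estimates yields
\begin{equation*}
(\underline{\nu}_0 - \sqrt{3}\,\sigma_\nu\,\chi)\,\delta\ \vecg{v}^{\text{T}}\widetilde{\mathcal{A}}_{\text{mg}}\vecg{v} \ \leq\ \vecg{v}^{\text{T}}\mathcal{A}\vecg{v}\ \leq\ (\overline{\nu}_0 + \sqrt{3}\,\sigma_\nu\,\chi)\,\Delta\ \vecg{v}^{\text{T}}\widetilde{\mathcal{A}}_{\text{mg}}\vecg{v},
\end{equation*}
which is \cref{eq_ev_bounds_SGFE_diff_approx_diff} with the claimed $\hat\delta$ and $\hat\Delta$; note $\hat\delta > 0$ by assumption \cref{eq_assumption_KL}. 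The main obstacle I anticipate is making the product/splitting estimate for the fluctuation blocks rigorous while keeping the resulting constants tight: a naive triangle-inequality bound of $\vecg{v}^{\text{T}}(G_\iKL\otimes A_\iKL)\vecg{v}$ using only norms would lose a factor and give a weaker lower bound, so the argument has to exploit that $G_\iKL$ and $A_\iKL$ act on disjoint tensor factors and that the \emph{same} vector $\vecg{v}$ appears on both sides — this is precisely where the Rayleigh-quotient product estimate of \cite{Pultarova2015} does its work, and it is the step that distinguishes this bound from the coarser one obtainable by directly following \cite[Lemma 4.6]{ErnstEtAl2009}.
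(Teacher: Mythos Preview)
Your approach to the upper bound is correct and coincides with the paper's: split $\mathcal{A}$ additively into its mean and fluctuation parts, bound each Rayleigh quotient separately against $I\otimes\widetilde{A}_{\text{mg}}$ using \cref{eq_upper_bound_mean_Diff_approxDiff} and \cref{ev_bounds_G_m}, and sum. This yields $\hat{\Delta}=(\overline{\nu}_0+\sqrt{3}\,\sigma_\nu\,\chi)\Delta$ as stated.

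The gap is in the lower bound. Your additive splitting gives, at best,
\[
\frac{\vecg{v}^{\text{T}}\mathcal{A}\vecg{v}}{\vecg{v}^{\text{T}}\widetilde{\mathcal{A}}_{\text{mg}}\vecg{v}}
\ \geq\ \underline{\nu}_0\,\delta \;-\; \sqrt{3}\,\sigma_\nu\,\chi\,\Delta,
\]
because the fluctuation estimate $|\vecg{v}^{\text{T}}(G_\iKL\otimes A_\iKL)\vecg{v}|\le \sqrt{3}\,\sigma_\nu\,\chi_\iKL\,\Delta\,\vecg{v}^{\text{T}}\widetilde{\mathcal{A}}_{\text{mg}}\vecg{v}$ necessarily carries a $\Delta$ on both sides (cf.\ \cref{eq_upper_bound_mean_Diff_approxDiff}: $A_\iKL$ is indefinite, and the only control of $|A_\iKL|$ through $\widetilde{A}_{\text{mg}}$ passes via $A$ and hence $\Delta$). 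There is no way to get $\delta$ in that term by ``combining the per-block estimates directly against $\widetilde{A}_{\text{mg}}$''; the constant you obtain is $\underline{\nu}_0\delta-\sqrt{3}\,\sigma_\nu\,\chi\,\Delta$, which is strictly smaller than the claimed $\hat{\delta}=(\underline{\nu}_0-\sqrt{3}\,\sigma_\nu\,\chi)\delta$ whenever $\delta<\Delta$. This is precisely the ``coarser'' bound the paper attributes to the approach of \cite[Lemma~4.6]{ErnstEtAl2009}.

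You also misidentify where the product estimate of \cite{Pultarova2015} enters. It is \emph{not} used per fluctuation block to separate $G_\iKL$ from $A_\iKL$. Instead, the paper uses a \emph{multiplicative} splitting of the whole operator,
\[
\mathcal{A}=(I\otimes A_0)\Big(\mathcal{I}+\sum_{\iKL=1}^{\nKL}G_\iKL\otimes A_0^{-1}A_\iKL\Big),
\]
so that the Rayleigh quotient factors as a product of two nonnegative quotients with the same $\vecg{v}$. One then bounds $\lambda_{\min}$ of the product from below by the product of the minima: the second factor contributes $\underline{\nu}_0^{-1}(\underline{\nu}_0-\sqrt{3}\,\sigma_\nu\,\chi)$ via \cref{eq_mean_Diff_Diff} and \cref{ev_bounds_G_m}, and the first factor (comparing $A_0$ to $\widetilde{A}_{\text{mg}}$) contributes $\underline{\nu}_0\,\delta$. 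Their product is $(\underline{\nu}_0-\sqrt{3}\,\sigma_\nu\,\chi)\delta=\hat{\delta}$. The point is that the multigrid constant $\delta$ now appears as a global multiplicative factor rather than additively against the fluctuation term, which is exactly what the additive route cannot achieve.
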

\begin{proof}
We start with the upper bound:
\begin{align}
\label{eq_bounds_SGFE_diff_upper}
\lambda_{\text{max}}\left(\widetilde{\mathcal{A}}_{\text{mg}}^{-1} \, \mathcal{A} \right) &= 
\max_{\vecg{v}\in\RR^{\nPC \nFE_u} \backslash \{\vecg{0}\}}
\frac{\vecg{v}^{\text{T}}\left(I \otimes A_0 + \sum_{\iKL=1}^{\nKL}G_\iKL \otimes A_\iKL \right) \vecg{v}}
{\vecg{v}^{\text{T}}\left(I \otimes \widetilde{A}_{\text{mg}} \right) \vecg{v}} \\
\notag
&\leq \lambda_{\text{max}}\left(I \otimes \widetilde{A}_{\text{mg}}^{-1}A_0 \right) + 
\sum_{\iKL=1}^\nKL
\lambda_{\text{max}}\left(G_m \otimes \widetilde{A}_{\text{mg}}^{-1}A_\iKL \right).
\end{align}
Considering the remaining terms, estimates for the eigenvalues of $I \otimes \widetilde{A}_{\text{mg}}^{-1}A_0$ 
are given in \cref{eq_upper_bound_mean_Diff_approxDiff}. Bounds for the eigenvalues of $G_m \otimes \widetilde{A}_{\text{mg}}^{-1}A_\iKL$, for $\iKL=1,\dots,\nKL$, follow from \cref{eq_upper_bound_mean_Diff_approxDiff,ev_bounds_G_m}:
\begin{equation}
-\sqrt{3} \, \sigma_\nu \, \chi_\iKL \, \Delta
\leq \frac{\vecg{v}^\text{T}\left(G_\iKL \otimes A_m \right)\vecg{v}}
{\vecg{v}^\text{T} \left( I \otimes \widetilde{A}_{\text{mg}} \right) \vecg{v}}
\leq \sqrt{3} \, \sigma_\nu \, \chi_\iKL \, \Delta, \quad \vecg{v} \in \RR^{\nPC \nFE_u} \backslash \{\vecg{0}\}.
\end{equation}
A subsequent summation over $\iKL$ yields the desired estimate
\begin{equation}
\label{eq_ev_bounds_SGFE_diff_4} 
\sum_{\iKL=1}^\nKL \lambda_{\text{max}}(G_m \otimes \widetilde{A}_{\text{mg}}^{-1}A_\iKL)
\leq \sqrt{3} \, \sigma_\nu \, \chi \, \Delta. 
\end{equation}
The upper bound $(\overline{\nu}_0 + \sqrt{3}\,\sigma_\nu \,\chi)\Delta$ then follows when using 
\cref{eq_upper_bound_mean_Diff_approxDiff,eq_ev_bounds_SGFE_diff_4} in \cref{eq_bounds_SGFE_diff_upper}.

To derive a lower bound, we do not proceed in the same way as for the upper one but use a specific product 
representation as in \cite[Lemma 3.2]{Pultarova2015}:
\begin{align}
\lambda_{\text{min}}\left(\widetilde{\mathcal{A}}_{\text{mg}}^{-1} \, \mathcal{A}\right) &= 
\min_{\vecg{v}\in\RR^{\nPC \nFE_u} \backslash \{\vecg{0}\}}
\frac{\vecg{v}^{\text{T}}\left(I \otimes A_0 + \sum_{\iKL=1}^{\nKL}G_\iKL \otimes A_\iKL \right) \vecg{v}}
{\vecg{v}^{\text{T}}\left(I \otimes \widetilde{A}_{\text{mg}} \right) \vecg{v}} \\
\notag
&= \min_{\vecg{v} \in \RR^{\nPC \nFE_u} \backslash \{\vecg{0}\}} \underbrace{\vecg{v}^{\text{T}} \frac{\left(\mathcal{I} + \sum_{\iKL=1}^{\nKL} 
G_{\iKL} \otimes A_0^{-1} A_{\iKL} \right)\vecg{v}}{\vecg{v}^{\text{T}}\vecg{v}}}_{(\text{I})} \underbrace{\frac{\vecg{v}^{\text{T}}\vecg{v}}
{\vecg{v}^{\text{T}}\left(I \otimes  A_0^{-1}\widetilde{A}_{\text{mg}} \right)\vecg{v}}}_{(\text{II})},
\end{align}
where $\mathcal{I}$ denotes the identity matrix of size $\nPC \nFE_u$.
If the matrices contained in (I) and (II) are symmetric non-negative definite, we can bound $\lambda_{\text{min}}(\widetilde{\mathcal{A}}_{\text{mg}}^{-1}\mathcal{A})$ by the product of the minimum 
eigenvalues of those matrices \cite[Chapter 9]{MarshallOlkin1979}. Both matrices are symmetric due to their 
symmetric building blocks. We are going to verify positive definiteness, starting with (I):
\begin{align}
\label{eq_stoch_diff_mat_lower_bound_end}
\min_{\vecg{v} \in \RR^{\nPC \nFE_u} \backslash \{\vecg{0}\}} \frac{\vecg{v}^{\text{T}}\left(\mathcal{I} + 
\sum_{\iKL=1}^{\nKL} 
G_{\iKL} \otimes A_0^{-1} A_{\iKL} \right)\vecg{v}}{\vecg{v}^{\text{T}}\vecg{v}} 
&\geq 1 + \sum_{\iKL=1}^{\nKL} \min_{\vecg{v} \in \RR^{\nPC \nFE_u} \backslash \{\vecg{0}\}} \frac{\vecg{v}^{\text{T}}\left( 
 G_{\iKL} \otimes A_0^{-1} A_{\iKL} \right)\vecg{v}}{\vecg{v}^{\text{T}}\vecg{v}}  \\
\notag
&\hspace{-0.58cm}\overset{\cref{eq_mean_Diff_Diff},\cref{ev_bounds_G_m}}{\geq} \underline{\nu}^{-1}_0(\underline{\nu}_0 - \sqrt{3} \,\sigma_\nu \,
\chi)\overset{\cref{eq_assumption_KL}}{>} 0.
\end{align}
Since the bound in \cref{eq_stoch_diff_mat_lower_bound_end} is positive, the matrix in (I) is positive 
definite. The matrix in (II) is positive definite as well because all of its building blocks are positive definite, see 
\cref{eq_upper_bound_mean_Diff_approxDiff}. We can therefore 
apply the product bound and \cref{eq_stoch_diff_mat_lower_bound_end} to derive
\begin{equation}
\label{eq_bounds_SGFE_diff_lower}
\lambda_{\text{min}}\left(\widetilde{\mathcal{A}}_{\text{mg}}^{-1} \, \mathcal{A}\right)
 \geq \underline{\nu}^{-1}_0(\underline{\nu}_0 - \sqrt{3} \,\sigma_\nu \,
\chi) \cdot \min_{\vecg{v} \in \RR^{\nPC \nFE_u} \backslash \{0\}} 
\frac{\vecg{v}^{\text{T}}\left(I\otimes \widetilde{A}_{\text{mg}}^{-1} A_0\right)\vecg{v}}
{\vecg{v}^{\text{T}}\vecg{v}}.
\end{equation}
Using the left bound of \cref{eq_upper_bound_mean_Diff_approxDiff} in
\cref{eq_bounds_SGFE_diff_lower} then yields the lower bound $(\underline{\nu}_0 - \sqrt{3} \,\sigma_\nu \,
\chi)\delta$.
\end{proof}
  
\subsection{The preconditioned SGFE Schur complement}
\label{subsec_precon_Schur}
The eigenvalue bounds are derived as in \cref{lemma_eigenvalues_precon_diff} by additionally relying on the inf-sup stability of the FE approximation, see also \cite[Lemma 4.6]{Mueller2018}.
\begin{lemma}
\label{lemma_eigenvalues_precon_Schur}
Let $\mathcal{S} = -\mathcal{B}  \mathcal{A}^{-1} \mathcal{B}^{\text{T}}$ be defined 
with building blocks $\mathcal{A}$ and $\mathcal{B}$ according to \cref{eq_SGFEM_diffusion_matrix} and 
\cref{eq_SGFEM_gradient_matrix}. Further, let $\widetilde{\mathcal{S}}_p = I \otimes D_p$ according to 
\cref{eq_block_precon}. Then, the following estimate holds:
\begin{equation}
\label{eq_ev_bounds_SGFE_Schur_approx_Schur}
\frac{\theta^2 \gamma^2}{\overline{\nu}_0 + \sqrt{3} \, \sigma_\nu \, \chi} \leq \frac{\vecg{q}^\text{T} \, \mathcal{B}  \mathcal{A}^{-1} \mathcal{B}^{\text{T}} \, \vecg{q}}
{\vecg{q}^\text{T} \left( I \otimes D_p \right) \vecg{q}} \leq  \frac{\Theta^2}{\underline{\nu}_0 - \sqrt{3} \, \sigma_\nu \, \chi}, \quad \forall \vecg{q} \in\RR^{\nPC \nFE_p}\backslash\{\vecg{0},\vecg{1}\}.
\end{equation}
\end{lemma}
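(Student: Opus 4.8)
The plan is to mimic the structure of the proof of Lemma~\ref{lemma_eigenvalues_precon_diff}, inserting the inf-sup argument needed to pass from the Laplacian bounds to Schur-complement bounds. First I would note that $\mathcal{B} = I \otimes B$, so for a pressure vector $\vecg{q}$ the quadratic form $\vecg{q}^{\text{T}} \mathcal{B}\mathcal{A}^{-1}\mathcal{B}^{\text{T}} \vecg{q}$ can be written as $\vecg{w}^{\text{T}} \mathcal{A}^{-1} \vecg{w}$ with $\vecg{w} := \mathcal{B}^{\text{T}}\vecg{q} = (I\otimes B^{\text{T}})\vecg{q}$. The key observation is that $\vecg{q}^{\text{T}}\mathcal{B}\mathcal{A}^{-1}\mathcal{B}^{\text{T}}\vecg{q} = \max_{\vecg{v}} \frac{(\vecg{v}^{\text{T}}\mathcal{B}^{\text{T}}\vecg{q})^2}{\vecg{v}^{\text{T}}\mathcal{A}\vecg{v}}$ (the standard dual characterization of $\mathcal{A}^{-1}$ via its Cholesky factor), which reduces everything to comparing $\mathcal{A}$ with a deterministic reference. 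Using Lemma~\ref{lemma_eigenvalues_precon_diff} and $\widetilde{\mathcal{A}}_{\text{mg}} = I\otimes \widetilde{A}_{\text{mg}}$, I would first replace $\mathcal{A}$ in this max-characterization by $I\otimes A$: combining \cref{eq_ev_bounds_SGFE_diff_approx_diff} with \cref{eq_upper_bound_Diff_approxDiff} and the fact that $\mathcal{A}$, $I\otimes A$, $\widetilde{\mathcal{A}}_{\text{mg}}$ are simultaneously comparable gives
\begin{equation*}
\frac{\underline{\nu}_0 - \sqrt{3}\,\sigma_\nu\,\chi}{1} \cdot \frac{\vecg{v}^{\text{T}}(I\otimes A)\vecg{v}}{1} \le \vecg{v}^{\text{T}}\mathcal{A}\vecg{v} \le (\overline{\nu}_0 + \sqrt{3}\,\sigma_\nu\,\chi)\,\vecg{v}^{\text{T}}(I\otimes A)\vecg{v}
\end{equation*}
(this follows directly from \cref{eq_mean_Diff_Diff,ev_bounds_G_m} applied blockwise, without even needing the multigrid cycle). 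Inverting these inequalities and substituting into the max-characterization yields
\begin{equation*}
\frac{1}{\overline{\nu}_0 + \sqrt{3}\,\sigma_\nu\,\chi}\,\vecg{q}^{\text{T}}(I\otimes BA^{-1}B^{\text{T}})\vecg{q} \le \vecg{q}^{\text{T}}\mathcal{B}\mathcal{A}^{-1}\mathcal{B}^{\text{T}}\vecg{q} \le \frac{1}{\underline{\nu}_0 - \sqrt{3}\,\sigma_\nu\,\chi}\,\vecg{q}^{\text{T}}(I\otimes BA^{-1}B^{\text{T}})\vecg{q}.
\end{equation*}

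The second step is to handle the deterministic Kronecker factor $I\otimes BA^{-1}B^{\text{T}}$ against $I\otimes D_p$. Since both are of the form $I\otimes(\cdot)$, the Rayleigh quotient $\vecg{q}^{\text{T}}(I\otimes BA^{-1}B^{\text{T}})\vecg{q}\,/\,\vecg{q}^{\text{T}}(I\otimes D_p)\vecg{q}$ has the same spectrum as the FE quotient $BA^{-1}B^{\text{T}}$ versus $D_p$, so \cref{eq_bound_Schur_diagMass} applies verbatim and gives the bounds $\theta^2\gamma^2$ and $\Theta^2$. One must be careful about the excluded subspace: for the stochastic problem the nullspace of $\mathcal{B}^{\text{T}} = I\otimes B^{\text{T}}$ is spanned by vectors of the form (arbitrary stochastic coefficient) $\otimes \vecg{1}$, which is exactly what "$\backslash\{\vecg{0},\vecg{1}\}$" is meant to denote in the stochastic setting, so the exclusion is consistent. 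Chaining the two displayed chains of inequalities then produces \cref{eq_ev_bounds_SGFE_Schur_approx_Schur}.

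The main obstacle is purely bookkeeping rather than conceptual: one has to verify that the product/comparison estimates can legitimately be applied blockwise to Kronecker sums $I\otimes A_0 + \sum_m G_m\otimes A_m$, i.e.\ that the bound $\vecg{v}^{\text{T}}\mathcal{A}\vecg{v}\le(\overline\nu_0+\sqrt3\sigma_\nu\chi)\vecg{v}^{\text{T}}(I\otimes A)\vecg{v}$ really holds for \emph{all} $\vecg{v}\in\RR^{\nPC\nFE_u}$ and not just separable ones — this is where the eigenvalue bounds \cref{ev_bounds_G_m} on $G_m$ enter, exactly as in the proof of Lemma~\ref{lemma_eigenvalues_precon_diff}, and the lower bound again relies on \cref{eq_assumption_KL} to keep $\underline\nu_0-\sqrt3\sigma_\nu\chi$ strictly positive so that the inversion step is valid. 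Everything else is a direct reuse of \cref{eq_bound_Schur_diagMass} together with the inf-sup constant $\gamma$ and the mass-matrix constants $\theta^2,\Theta^2$ already recorded in \cref{subsec_precon_fe_matrices}.
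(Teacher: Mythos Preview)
Your proposal is correct and follows essentially the same route as the paper: both arguments first compare $\mathcal{A}$ with $I\otimes A$ via \cref{eq_mean_Diff_Diff} and \cref{ev_bounds_G_m} to obtain the sandwich $(\overline{\nu}_0+\sqrt{3}\sigma_\nu\chi)^{-1}(I\otimes BA^{-1}B^{\text{T}})\le \mathcal{B}\mathcal{A}^{-1}\mathcal{B}^{\text{T}}\le(\underline{\nu}_0-\sqrt{3}\sigma_\nu\chi)^{-1}(I\otimes BA^{-1}B^{\text{T}})$, and then apply \cref{eq_bound_Schur_diagMass} blockwise. The only cosmetic difference is that you phrase the comparison step through the max-characterization of $\mathcal{A}^{-1}$, whereas the paper states it as a Rayleigh-quotient bound on $(I\otimes A)^{-1}\mathcal{A}$ and cites \cite[Lemma~2.4]{SilvesterWathen1994} for the passage to the Schur complement---the content is identical.
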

\begin{proof}
We start by considering
\begin{equation}
\label{eq_ev_bounds_SGFE_Schur_2} 
(I \otimes A)^{-1} \mathcal{A}
= I \otimes A^{-1}A_0 + \sum_{\iKL=1}^\nKL G_\iKL \otimes A^{-1}A_\iKL.
\end{equation}
Using \cref{eq_mean_Diff_Diff,ev_bounds_G_m} with \cref{eq_ev_bounds_SGFE_Schur_2}, we derive
\begin{align}
\label{eq_ev_bounds_SGFE_Schur_5} 
\underline{\nu}_0 -  \sqrt{3} \, \sigma_\nu \, \chi  \leq 
\frac{\vecg{v}^{\text{T}}(I\otimes A)^{-1} \vecg{v}}
{\vecg{v}^{\text{T}} \mathcal{A}^{-1} \vecg{v}} &\leq \overline{\nu}_0 + \sqrt{3} \, \sigma_\nu \, \chi 
\end{align}
for all $\vecg{v} \in \RR^{\nPC \nFE_u} \backslash \{\vecg{0}\}$. Proceeding as in the proof of \cite[Lemma 2.4]{SilvesterWathen1994}, we derive
\begin{align}
\vecg{q}^\text{T} \widetilde{\mathcal{S}}_p^{-1}\mathcal{B}  \mathcal{A}^{-1} \mathcal{B}^{\text{T}} \vecg{q} &\overset{\cref{eq_ev_bounds_SGFE_Schur_5}}{\leq} (\underline{\nu}_0 - \sqrt{3} \, \sigma_\nu \, \chi)^{-1} \, \vecg{q}^\text{T} \widetilde{\mathcal{S}}_p^{-1}\mathcal{B}  (I\otimes A)^{-1} \mathcal{B}^{\text{T}}\vecg{q} \\
\notag
&\overset{\cref{eq_bound_Schur_diagMass}}{\leq} (\underline{\nu}_0 - \sqrt{3} \, \sigma_\nu \, \chi)^{-1} \Theta^2 \vecg{q}^\text{T} \vecg{q}
\end{align}
for $\vecg{q} \in \RR^{\nPC \nFE_p} \backslash \{\vecg{1}\}$ which is the upper bound in the assertion. The lower bound is derived analogously using the upper bound in \cref{eq_ev_bounds_SGFE_Schur_5} and the lower bound in \cref{eq_bound_Schur_diagMass}.
\end{proof}

\subsection{The preconditioned approximate SGFE Schur complement}
\label{subsec_precon_pseudo_Schur}
We call $\mathcal{B}\widetilde{\mathcal{A}}_{\text{mg}}^{-1} \mathcal{B}^{\text{T}}$ the approximate SGFE Schur complement. The matrix looks like the original Schur complement, but the Laplacian is replaced by its preconditioner. Bounds on the eigenvalues of the preconditioned approximate Schur complement are derived in the following lemma.
\begin{lemma}
\label{lemma_eigenvalues_precon_quasi_Schur}
Let $\widetilde{\mathcal{A}}_{\text{mg}}=I \otimes \widetilde{A}_{\text{mg}}$ and $\widetilde{S}_p=I \otimes D_p$ according to 
\cref{eq_block_precon}. Then, the following estimate holds:
\begin{equation}
\label{eq_bounds_SGFE_quasi_Schur_approx_Schur}
\delta \, \theta^2 \gamma^2 \leq \frac{\vecg{q}^{\text{T}} \mathcal{B}\widetilde{\mathcal{A}}_{\text{mg}}^{-1}
\mathcal{B}^{\text{T}}\vecg{q}}
{\vecg{q}^{\text{T}}\widetilde{S}_p \vecg{q}} 
\leq \Delta \, \Theta^2, \quad \forall \vecg{q} \in \RR^{\nPC \nFE_p} \backslash \{\vecg{0},\vecg{1}\}.
\end{equation}
\end{lemma}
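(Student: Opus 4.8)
The key observation is that, unlike in \cref{lemma_eigenvalues_precon_diff} and \cref{lemma_eigenvalues_precon_Schur}, the SGFE Laplacian $\mathcal{A}$ itself never enters here --- only $\mathcal{B}$, $\widetilde{\mathcal{A}}_{\text{mg}}$ and $\widetilde{S}_p$ appear, and all three carry the identity in their stochastic factor. The plan is therefore to first strip off that factor, reducing \cref{eq_bounds_SGFE_quasi_Schur_approx_Schur} to a purely finite-element estimate, and then to combine the multigrid spectral equivalence \cref{eq_upper_bound_Diff_approxDiff} with the deterministic Schur-complement bound \cref{eq_bound_Schur_diagMass}.

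\emph{Reduction via the Kronecker structure.} Since $\mathcal{B} = I \otimes B$ and $\widetilde{\mathcal{A}}_{\text{mg}} = I \otimes \widetilde{A}_{\text{mg}}$ with $\widetilde{A}_{\text{mg}}$ symmetric positive definite, the mixed-product rule for Kronecker products gives $\mathcal{B}\,\widetilde{\mathcal{A}}_{\text{mg}}^{-1}\,\mathcal{B}^{\text{T}} = I \otimes \bigl(B\,\widetilde{A}_{\text{mg}}^{-1}\,B^{\text{T}}\bigr)$, and by \cref{eq_block_precon} we also have $\widetilde{S}_p = I \otimes D_p$. Hence the generalized eigenvalues of the pencil $\bigl(\mathcal{B}\,\widetilde{\mathcal{A}}_{\text{mg}}^{-1}\,\mathcal{B}^{\text{T}},\,\widetilde{S}_p\bigr)$ are exactly those of the finite-element pencil $\bigl(B\,\widetilde{A}_{\text{mg}}^{-1}\,B^{\text{T}},\,D_p\bigr)$, each with multiplicity $\nPC$, while the excluded set $\{\vecg{0},\vecg{1}\}$ --- the kernel of $\mathcal{B}^{\text{T}}$, i.e.\ the pressure coefficient vectors that are spatially constant for every stochastic mode --- is $\RR^{\nPC}$ tensored with the constant-pressure kernel of $B^{\text{T}}$ and thus splits off cleanly. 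It therefore suffices to establish $\delta\,\theta^2\gamma^2 \leq \vecg{r}^{\text{T}}B\,\widetilde{A}_{\text{mg}}^{-1}\,B^{\text{T}}\vecg{r} / (\vecg{r}^{\text{T}}D_p\vecg{r}) \leq \Delta\,\Theta^2$ for all $\vecg{r} \in \RR^{\nFE_p}\backslash\{\vecg{0},\vecg{1}\}$.

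\emph{Transferring the multigrid equivalence to the Schur complement.} The estimate \cref{eq_upper_bound_Diff_approxDiff} says $\delta\,\vecg{v}^{\text{T}}\widetilde{A}_{\text{mg}}\vecg{v} \leq \vecg{v}^{\text{T}}A\vecg{v} \leq \Delta\,\vecg{v}^{\text{T}}\widetilde{A}_{\text{mg}}\vecg{v}$ for all $\vecg{v}$. Inversion of symmetric positive definite matrices reverses this ordering and replaces each constant by its reciprocal, so $\Delta^{-1}\,\vecg{w}^{\text{T}}\widetilde{A}_{\text{mg}}^{-1}\vecg{w} \leq \vecg{w}^{\text{T}}A^{-1}\vecg{w} \leq \delta^{-1}\,\vecg{w}^{\text{T}}\widetilde{A}_{\text{mg}}^{-1}\vecg{w}$ for all $\vecg{w}$. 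Setting $\vecg{w} = B^{\text{T}}\vecg{r}$ then yields $\delta\,\vecg{r}^{\text{T}}BA^{-1}B^{\text{T}}\vecg{r} \leq \vecg{r}^{\text{T}}B\,\widetilde{A}_{\text{mg}}^{-1}\,B^{\text{T}}\vecg{r} \leq \Delta\,\vecg{r}^{\text{T}}BA^{-1}B^{\text{T}}\vecg{r}$ for every $\vecg{r} \in \RR^{\nFE_p}$.

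\emph{Assembling the bound.} Finally I would insert \cref{eq_bound_Schur_diagMass}, which for $\vecg{r} \in \RR^{\nFE_p}\backslash\{\vecg{0},\vecg{1}\}$ sandwiches $\vecg{r}^{\text{T}}BA^{-1}B^{\text{T}}\vecg{r}$ between $\theta^2\gamma^2\,\vecg{r}^{\text{T}}D_p\vecg{r}$ and $\Theta^2\,\vecg{r}^{\text{T}}D_p\vecg{r}$: multiplying the lower inequality by $\delta$ and the upper one by $\Delta$ produces exactly the constants $\delta\theta^2\gamma^2$ and $\Delta\Theta^2$, and lifting back through the Kronecker identity from the first step gives \cref{eq_bounds_SGFE_quasi_Schur_approx_Schur}. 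This is essentially the classical Stokes Schur-complement estimate of \cite{SilvesterWathen1994,ElmanEtAl2014} composed with the multigrid bound \cref{eq_upper_bound_Diff_approxDiff}, so I do not expect a genuine obstacle; the only points that need care are fixing the direction of the inequalities under inversion and the bookkeeping of the constant-pressure kernel, which --- as already in \cref{eq_bound_Schur_diagMass} and \cref{lemma_eigenvalues_precon_Schur} --- has to be excluded because $B\,\widetilde{A}_{\text{mg}}^{-1}\,B^{\text{T}}$ annihilates it.
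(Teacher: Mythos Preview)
Your proof is correct and follows essentially the same route as the paper: both reduce to the finite-element level via the Kronecker identity, then chain the multigrid spectral equivalence \cref{eq_upper_bound_Diff_approxDiff} with the deterministic Schur-complement bound \cref{eq_bound_Schur_diagMass}, citing \cite{SilvesterWathen1994}. Your presentation is slightly more explicit about the inversion of the Loewner ordering and the role of the constant-pressure kernel, but the argument is the same.
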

\begin{proof}
We follow the steps of the proof of \cite[Lemma 2.4]{SilvesterWathen1994}. Inserting the definitions yields
\begin{equation}
\label{eq:ev_bounds_approx_Schur_arbit_I}
\vecg{q}^{\text{T}} \widetilde{\mathcal{S}}_p^{-1} \mathcal{B} \widetilde{\mathcal{A}}_{\text{mg}}^{-1} \mathcal{B}^{\text{T}}  \vecg{q} = \vecg{q}^{\text{T}}
(I \otimes D_p^{-1} B \widetilde{A}_{\text{mg}}^{-1} B^{\text{T}} )\vecg{q}
\end{equation}
for all $\vecg{q} \in \RR^{\nPC \nFE_p}$. As the identity matrix does not influence spectral bounds for the expression, the considerations can be reduced to the FE matrices on the right-hand side of \cref{eq:ev_bounds_approx_Schur_arbit_I}:
\begin{align*}
\vecg{q}^{\text{T}} D_p^{-1} B \widetilde{A}_{\text{mg}}^{-1} B^{\text{T}} \vecg{q} \overset{\cref{eq_upper_bound_Diff_approxDiff}}{\leq} \Delta \, \vecg{q}^{\text{T}}
D_p^{-1} B A^{-1} B^{\text{T}} \vecg{q} \overset{\cref{eq_bound_Schur_diagMass}} \leq \Delta\, \Theta^2 \, \vecg{q}^{\text{T}}
\vecg{q},
\end{align*}
for $\vecg{q} \in \RR^{\nFE_p}\backslash \{\vecg{1}\}$. The upper bound in the assertion immediately follows and the lower one is derived analogously using the lower bounds in \cref{eq_upper_bound_Diff_approxDiff,eq_bound_Schur_diagMass}.
\end{proof}
Based on the bounds in the \cref{lemma_eigenvalues_precon_diff,lemma_eigenvalues_precon_Schur,lemma_eigenvalues_precon_quasi_Schur}, existing convergence results are utilized in \cref{sec_iterative_solvers} to predict the convergence behavior of the iterative solvers that are presented.

\section{Iterative solvers}
\label{sec_iterative_solvers}
As solvers for the coupled SGFE system, we compare a preconditioned MINRES method \cite{PaigeSaunders1975} with a BPCG method \cite{BramblePasciak1988}. A similar comparison for Stokes flow with deterministic data can be found in~\cite{PetersEtAl2005}. We utilize existing results to make predictions about the convergence behavior of the linear solvers based on the eigenvalue bounds derived in \cref{sec_ev_analysis}. Finally, we comment on the complexity and implementation of the two solvers.

\subsection{The MINRES method}
\label{subsec_is_minres}
The system matrix $\mathcal{C}$ of the SGFE Stokes problem \cref{eq_block_matrix_form} is symmetric and indefinite, see \cref{eq_block_factorizations}. The MINRES method is a standard Krylov subspace solver for systems of equations with such a structure. Because MINRES relies on the symmetry of the problem, it requires a symmetric preconditioner which must be positive definite as well. This excludes the block triangular preconditioner $\mathcal{P}_2$ in \cref{eq_block_precon}. Hence, we will use MINRES solely in combination with the block diagonal preconditioner $\mathcal{P}_1$.

MINRES is a short-recurrence method and applications of $\mathcal{C}$ and $\mathcal{P}_1^{-1}$ are the only matrix-vector operations necessary per iteration \cite[Algorithm~4.1]{ElmanEtAl2014}. It is worth noticing that MINRES convergence does not rely on a scaling of the involved preconditioners and the solver can thus be considered parameter-free.

A standard convergence result for MINRES, as given in \cite[Theorem 4.14]{ElmanEtAl2014}, can be applied when eigenvalue bounds for the preconditioned system matrix are available. In our setting, eigenvalue bounds for $\mathcal{P}_1^{-1}\mathcal{C}$ are thus needed. They can be constructed based on the bounds derived in the \cref{lemma_eigenvalues_precon_diff,lemma_eigenvalues_precon_quasi_Schur} using \cite[Theorem 4.1]{ErnstEtAl2009}, see also \cite[section 4.2.4]{Mueller2018}.

As a consequence, the convergence behavior of the MINRES method combined with the preconditioner $\mathcal{P}_1$ is influenced by the modeling and discretization parameters contained in the bounds in the \cref{lemma_eigenvalues_precon_diff,lemma_eigenvalues_precon_quasi_Schur}.
The iteration counts should thus be asymptotically independent of the parameters that do not occur in those
bounds, i.e.\ the mesh width $h$, the degree of the polynomial chaos $\stochasticDegree$ and 
the dimension of the random input $\nKL$.~Nevertheless, the bounds still depend on
the mean field $\nu_0(x)$ and the process standard deviation 
$\sigma_\nu$.

\subsection{The Bramble-Pasciak conjugate gradient method}
\label{subsec_is_bpcg}
When the triangular preconditioner $\mathcal{P}_2$ in \cref{eq_block_precon} is applied instead of 
$\mathcal{P}_1$, the preconditioned system matrix $\mathcal{P}_2^{-1} \mathcal{C}$ is nonsymmetric. There does 
not exist a short recurrence for generating an orthogonal Krylov subspace basis for every nonsymmetric matrix
\cite{FaberManteuffel1984}. As an alternative, one can use a short-recurrence and construct a Krylov subspace 
basis which is not orthogonal, thereby loosing the minimization property of the method, see the Bi-CGSTAB 
algorithm \cite{VanDerVorst1992}. The other option is to maintain the orthogonality of the basis at the 
expense of a full-recurrence orthogonalization procedure, as it is done for GMRES \cite{SaadSchultz1986}. 

There are special nonsymmetric problems for which an orthogonal Krylov subspace basis can be 
constructed with short recurrence \cite{FaberManteuffel1984}. The block triangular preconditioned system matrix
$\mathcal{P}_2^{-1} \mathcal{C}$ is such a case due to the manipulations introduced to $\mathcal{P}_2$ in the beginning of \cref{sec_ev_analysis}. This is shown in the following lemma.
\begin{lemma}
\label{lemma_ipcg_conditions}
Let $a$ in \cref{eq_block_precon} be set to $a= a_\delta \hat{\delta}$, with 
$0 < a_\delta < 1$ and $\hat{\delta}$ as in \cref{lemma_eigenvalues_precon_diff}. Then, the matrix 
\begin{align}
\label{eq_inner_product_matrix}
\mathcal{H} := \begin{bmatrix} \mathcal{A} - a_{\delta} \hat{\delta}\widetilde{\mathcal{A}}_{\text{mg}} & 0 \\ 0 & 
\widetilde{\mathcal{S}}_p \end{bmatrix},
\end{align}
with $\widetilde{\mathcal{A}}_{\text{mg}}$ and $\widetilde{\mathcal{S}}_p$ as in 
\cref{eq_block_precon},  defines an inner product. Moreover, the triangular preconditioned system 
matrix $\mathcal{P}_2^{-1} \mathcal{C}$, with $\mathcal{C}$ and $\mathcal{P}_2$ according to 
\cref{eq_block_matrix_form,eq_block_precon}, respectively, is $\mathcal{H}$-symmetric and $\mathcal{H}$-positive definite, 
i.e.
\begin{align}
\label{eq_inner_product_cond_2}
\hspace{2cm} &\mathcal{H} \mathcal{P}_2^{-1} \mathcal{C} = (\mathcal{P}_2^{-1}\mathcal{C})^T\mathcal{H}, \\
\label{eq_inner_product_cond_1}
&\vecg{w}^{\text{T}} \mathcal{H} \mathcal{P}_2^{-1} \mathcal{C} \vecg{w} > 0, \hspace{2cm} \forall 
\vecg{w} \in \RR^{\nPC \nFE} \backslash\{\vecg{0}\}.
\end{align}
\end{lemma}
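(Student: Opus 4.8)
The plan is to verify the three assertions in turn; throughout, write $\mathcal{A}_0 := a_\delta\hat{\delta}\,\widetilde{\mathcal{A}}_{\text{mg}}$ for the scaled velocity block of $\mathcal{P}_2$, so that $\mathcal{H}=\diag(\mathcal{A}-\mathcal{A}_0,\ \widetilde{\mathcal{S}}_p)$. To see that $\mathcal{H}$ defines an inner product, note it is block diagonal with symmetric blocks, so it suffices that each block be positive definite. The block $\widetilde{\mathcal{S}}_p=I\otimes D_p$ is positive definite because $D_p=\diag(M_p)$ has strictly positive diagonal. For the block $\mathcal{A}-\mathcal{A}_0$, \cref{lemma_eigenvalues_precon_diff} gives $\vecg{v}^{\text{T}}\mathcal{A}\vecg{v}\geq\hat{\delta}\,\vecg{v}^{\text{T}}\widetilde{\mathcal{A}}_{\text{mg}}\vecg{v}$, whence $\vecg{v}^{\text{T}}(\mathcal{A}-\mathcal{A}_0)\vecg{v}\geq(1-a_\delta)\hat{\delta}\,\vecg{v}^{\text{T}}\widetilde{\mathcal{A}}_{\text{mg}}\vecg{v}>0$ for $\vecg{v}\neq\vecg{0}$, using $0<a_\delta<1$, $\hat{\delta}>0$ (a consequence of \cref{eq_assumption_KL}), and positive definiteness of $\widetilde{\mathcal{A}}_{\text{mg}}$.

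For \cref{eq_inner_product_cond_2}, observe that since $\mathcal{H}$ is symmetric the claim is equivalent to $\mathcal{H}\mathcal{P}_2^{-1}\mathcal{C}$ being symmetric. I would write $\mathcal{P}_2^{-1}$ in closed form (it exists, being block lower triangular with invertible diagonal blocks) and multiply out the three factors; the result is a $2\times2$ block matrix with diagonal blocks $\mathcal{A}\mathcal{A}_0^{-1}\mathcal{A}-\mathcal{A}$ and $\mathcal{B}\mathcal{A}_0^{-1}\mathcal{B}^{\text{T}}$, both manifestly symmetric, and with off-diagonal blocks $(\mathcal{A}\mathcal{A}_0^{-1}-\mathcal{I})\mathcal{B}^{\text{T}}$ and $\mathcal{B}\mathcal{A}_0^{-1}(\mathcal{A}-\mathcal{A}_0)$, which are mutual transposes. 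The verification uses only symmetry of $\mathcal{A}$, $\widetilde{\mathcal{A}}_{\text{mg}}$, $\widetilde{\mathcal{S}}_p$ and the cancellation of the $\widetilde{\mathcal{S}}_p$ factors in the lower-left block, and is routine.

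For \cref{eq_inner_product_cond_1}, given $\vecg{w}=(\vecg{v}^{\text{T}},\vecg{q}^{\text{T}})^{\text{T}}\neq\vecg{0}$, I would introduce $\vecg{\xi}:=\mathcal{A}_0^{-1}(\mathcal{A}\vecg{v}+\mathcal{B}^{\text{T}}\vecg{q})$, the velocity component of $\mathcal{P}_2^{-1}\mathcal{C}\vecg{w}$, and use the identity $\mathcal{B}^{\text{T}}\vecg{q}=\mathcal{A}_0\vecg{\xi}-\mathcal{A}\vecg{v}$ to eliminate $\vecg{q}$ from the quadratic form $\vecg{w}^{\text{T}}\mathcal{H}\mathcal{P}_2^{-1}\mathcal{C}\vecg{w}$. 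Completing the square with respect to $\mathcal{A}_0$ then collapses it to $(\vecg{\xi}-\vecg{v})^{\text{T}}\mathcal{A}_0(\vecg{\xi}-\vecg{v})+\vecg{v}^{\text{T}}(\mathcal{A}-\mathcal{A}_0)\vecg{v}$, a sum of two nonnegative terms by the first step. Vanishing would force $\vecg{v}=\vecg{0}$ (since $\mathcal{A}-\mathcal{A}_0$ is positive definite) and then $\vecg{\xi}=\vecg{0}$, hence $\mathcal{B}^{\text{T}}\vecg{q}=\mathcal{A}_0\vecg{\xi}=\vecg{0}$; inf-sup stability of the Taylor--Hood pair (positivity of $\gamma$ in \cref{eq_lower_bound_Schur_Mass}) makes $\mathcal{B}^{\text{T}}=I\otimes B^{\text{T}}$ injective on the discrete pressure space, so $\vecg{q}=\vecg{0}$ and $\vecg{w}=\vecg{0}$, a contradiction.

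The quadratic-form manipulation in this last step is the only part requiring care: expanding everything against $\mathcal{A}^{-1}$, the seemingly natural choice, does not simplify, whereas keeping $\vecg{\xi}$ as an unknown and completing the square in $\mathcal{A}_0$ is what produces the two nonnegative terms. The same computation also makes transparent the roles of the two nonstandard features of $\mathcal{P}_2$: the negative sign on $\widetilde{\mathcal{S}}_p$ is what yields the cancellations behind the $\mathcal{H}$-symmetry, and the strict inequality $a_\delta<1$ in $a=a_\delta\hat{\delta}$ is exactly what upgrades $\mathcal{A}-\mathcal{A}_0$ from merely positive semidefinite --- all \cref{lemma_eigenvalues_precon_diff} guarantees when $a_\delta=1$ --- to positive definite, which the argument above requires.
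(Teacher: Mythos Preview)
Your proof is correct and, for the first two parts (that $\mathcal{H}$ is symmetric positive definite and that $\mathcal{H}\mathcal{P}_2^{-1}\mathcal{C}$ is symmetric), follows essentially the same route as the paper: block-wise verification of positive definiteness via \cref{lemma_eigenvalues_precon_diff}, and a direct block-product computation for the symmetry identity.

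For the $\mathcal{H}$-positive definiteness \cref{eq_inner_product_cond_1}, the paper does not give an argument in-text but defers to a congruence-transform factorization in an external reference. Your explicit completing-the-square identity
\[
\vecg{w}^{\text{T}}\mathcal{H}\mathcal{P}_2^{-1}\mathcal{C}\,\vecg{w}
=(\vecg{\xi}-\vecg{v})^{\text{T}}\mathcal{A}_0(\vecg{\xi}-\vecg{v})+\vecg{v}^{\text{T}}(\mathcal{A}-\mathcal{A}_0)\vecg{v},
\qquad \vecg{\xi}=\mathcal{A}_0^{-1}(\mathcal{A}\vecg{v}+\mathcal{B}^{\text{T}}\vecg{q}),
\]
is precisely such a congruence decomposition written out in coordinates, so the two approaches coincide in substance; yours simply makes the mechanism self-contained. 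Your closing remark that strict positivity requires injectivity of $\mathcal{B}^{\text{T}}$ on the discrete pressure space (equivalently, the inf-sup constant $\gamma>0$) is the right way to handle the case $\vecg{v}=\vecg{0}$, and is consistent with the paper's standing convention of working modulo the constant pressure mode, cf.\ the exclusion of $\vecg{1}$ in \cref{eq_lower_bound_Schur_Mass,eq_ev_bounds_SGFE_Schur_approx_Schur}.
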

\begin{proof}
First of all, we make sure that the matrix $\mathcal{H}$ defines an inner product, i.e.\ check that it is 
symmetric and positive definite. Subsequently, we establish  \cref{eq_inner_product_cond_2} and \cref{eq_inner_product_cond_1}.

The symmetry of $\mathcal{H}$ is verified by noting that the building blocks of the matrices on the diagonal are symmetric. To verify positive definiteness of $\mathcal{H}$, we consider its diagonal blocks separately. The lower diagonal block $\widetilde{\mathcal{S}}_p = I \otimes D_P$ is positive definite because its Kronecker factors are positive definite, see \cite[Section 3.5]{ElmanEtAl2014}. The investigation of the upper diagonal block is more intricate: The matrix $\mathcal{A} - a_{\delta} \hat{\delta}\widetilde{\mathcal{A}}_{\text{mg}}$ is positive definite if
\begin{equation}
\label{eq_condition_pd_upper}
\frac{\vecg{v}^{\text{T}}(\mathcal{A} - a_{\delta} \hat{\delta}\widetilde{\mathcal{A}}_{\text{mg}})
\vecg{v}}{\vecg{v}^{\text{T}} \vecg{v}} = \frac{\vecg{v}^{\text{T}}\mathcal{A} \vecg{v}}{\vecg{v}^{\text{T}} \vecg{v}}
 - \frac{\vecg{v}^{\text{T}}a_{\delta} \hat{\delta} \widetilde{\mathcal{A}}_{\text{mg}} \vecg{v}}{\vecg{v}^{\text{T}} \vecg{v}}
 >  0 \quad \forall \vecg{v} \in \RR^{\nPC \nFE_u} \backslash \{\vecg{0}\}. 
\end{equation}
We can express this condition equivalently by rearranging \cref{eq_condition_pd_upper}:
\begin{equation}
\label{eq_condition_pd_bpcg}
\min_{\vecg{v}\in \RR^{\nPC \nFE_u} \backslash \{\vecg{0}\}} 
\frac{\vecg{v}^{\text{T}}\mathcal{A} \vecg{v}}{\vecg{v}^{\text{T}} a_{\delta} \hat{\delta}\widetilde{\mathcal{A}}_{\text{mg}}
\vecg{v}} > 1.
\end{equation}
From \cref{eq_ev_bounds_SGFE_diff_approx_diff} and $0 < a_\delta < 1$, we can deduce $\lambda_{\text{min}}(a_{\delta}^{-1} \hat{\delta}^{-1} \widetilde{\mathcal{A}}_{\text{mg}}^{-1} \mathcal{A}) \geq a_\delta^{-1} > 1$, which eventually ensures the positive definiteness of $\mathcal{A} - a_{\delta} \hat{\delta}\widetilde{\mathcal{A}}_{\text{mg}}$ and thereby the positive 
definiteness of $\mathcal{H}$.

In order to see that \cref{eq_inner_product_cond_2} holds, we perform the matrix-matrix products on both sides of the equals sign and ascertain that the results are identical. Condition \cref{eq_inner_product_cond_1} is established based on a matrix 
factorization in the form of a congruence transform, see \cite[Lemma 4.9]{Mueller2018}.
\end{proof}
As a consequence, the nonsymmetric system associated with $\mathcal{P}_2^{-1} \mathcal{C}$ can be solved with a CG method in the $\mathcal{H} \mathcal{P}_2^{-1} \mathcal{C}$-inner product. However, this approach also has disadvantages: Firstly, the preconditioner $\widetilde{\mathcal{A}}_{\text{mg}}$ must be scaled such that \cref{eq_condition_pd_bpcg} holds, i.e.\ the scaling parameter $a$ must be chosen properly. This can be done using the analytically derived bound \cref{eq_ev_bounds_SGFE_diff_approx_diff}, as was shown in \cref{lemma_ipcg_conditions}. When this bound is too pessimistic, using the scaling based on the analytical estimate might lead to higher iteration counts than using the \textit{correct} scaling. In such cases, determining the scaling by computing $\lambda_{\text{min}}( \widetilde{\mathcal{A}}_{\text{mg}}^{-1} \mathcal{A})$ numerically could be beneficial, though resulting in increased computational costs. We investigate the influence of the scaling on the convergence behavior of the BPCG method in \cref{table_varying_scaling}.~Secondly, the naive BPCG procedure is formulated in the~$\mathcal{H} \mathcal{P}_2^{-1} \mathcal{C}$-inner product.~Additional matrix-vector operations could arise from the evaluation of quantities in that inner product. Fortunately, CG methods have several properties which can be exploited to reformulate the algorithm \cite{AshbyEtAl1990}. Using these reformulations, it is possible to derive a BPCG algorithm which needs only one extra matrix-vector operation per iteration compared to preconditioned MINRES: a multiplication with $\mathcal{B}$, which originates from the definition of $\mathcal{P}_2$ in \cref{eq_block_precon}, see \cite{PetersEtAl2005}. As the matrix $\mathcal{B}$ is block diagonal, however, this operation is cheap compared to a multiplication with $\mathcal{A}$, which is not block diagonal. Because of this feature, the BPCG method is particularly interesting in the SGFE setting.

A standard convergence result for CG methods such as \cite[Theorem 9.4.12]{Hackbusch1994} relies on eigenvalue bounds for the preconditioned system matrix. For the block triangular preconditioned matrix $\mathcal{P}_2^{-1}\mathcal{C}$, these bounds can be constructed based on \cite[Theorem 4.1]{Zulehner2001}, using the bounds in \cref{lemma_eigenvalues_precon_diff} and a scaled version of the bounds in \cref{lemma_eigenvalues_precon_Schur}, see \cite[section 4.2.5]{Mueller2018}. Following the same line of argumentation as for the MINRES method in \cref{subsec_is_minres}, we use this convergence result to predict the convergence behavior of the BPCG solver with respect to different problem parameters: We claim that the convergence of the method is asymptotically independent of the parameters which do not occur in the bounds in the \cref{lemma_eigenvalues_precon_diff,lemma_eigenvalues_precon_Schur}. Consequently, the iteration counts should be asymptotically independent of the spatial mesh width $h$, the chaos degree $\stochasticDegree$ and the dimension $\nKL$ of the random input. The mean field $\nu_0(x)$ and the process standard deviation $\sigma_\nu$ remain in the bounds.

\section{Numerical test case}
\label{sec_numerical_examples}
We investigate the convergence behavior of the two iterative methods discussed in \cref{sec_iterative_solvers} 
by the help of a numerical example. Furthermore, we check the theoretical predictions and 
compare the performance of the solvers.

In order to parametrize the uncertain viscosity properly with a moderate number of random variables, we use the separable
exponential covariance function $C_\nu: \spatialDomain \times \spatialDomain \to \RR$ defined by $C_\nu(x,z) = \sigma^2_\nu \, e^{-\abs{x_1-z_1}/b_1 - \abs{x_2-z_2}/b_2}
$, where the scalars $b_1$ and $b_2$ are the correlation lengths in the $x_1$ and $x_2$ direction, respectively. 
The eigenpairs of the two-dimensional KLE can be constructed by combining the eigenpairs 
of two one-dimensional equations, which have an analytical representation \cite[section 5.3]{GhanemSpanos1991}, 
\cite[section 7.1]{LordEtAl2014}.

As a numerical test case, we consider a regularized lid-driven cavity \cite[section 3.1]{ElmanEtAl2014} on the 
unit square $\spatialDomain=[-0.5,0.5]\times[-0.5,0.5]$. We use no-slip conditions for the velocity field 
everywhere on the domain boundary except at the top lid, where we prescribe a parabolic flow profile 
$u(x) = (1-16x_1^4,0)^T$. The volume forces $f(x)$ are set to zero. If not specified otherwise, we use 
the following default parameter values for the simulations:
\begin{align}
\label{eq_default_parameter_set}
h=0.01, \quad \stochasticDegree=2, \quad \nKL= 10, \quad \nu_0 = 1, \quad \sigma_\nu = 0.1, \quad b_1=b_2=1.
\end{align}

The multigrid method is part of both of our preconditioners in \cref{eq_block_precon}. For the following 
simulations, we use the algebraic multigrid implemented in the IFISS package 
\cite{ElmanEtAl2014b} with two point Gauss-Seidel pre-and post-smoothing sweeps. Numerical experiments in 
\cite[section 2.5]{ElmanEtAl2014} suggest that relation 
\cref{eq_upper_bound_Diff_approxDiff} is then fulfilled with constants $\delta \approx 0.85$ and 
$\Delta \approx 1.15$. All other computations are carried out in our own finite element implementation 
\cite{Ullmann2016} in MATLAB. 

In the following, we compare the iterative methods discussed in \cref{sec_iterative_solvers}. We consider the $\mathcal{P}_1$-pre\-con\-di\-tio\-ned MINRES ($\mathcal{P}_1$-MINRES) and the $\mathcal{P}_2$-preconditioned BPCG, with the analytical scaling $a=\hat{\delta}$ according to \cref{lemma_eigenvalues_precon_diff} ($\mathcal{P}^{ana}_2$-BPCG) and with the scaling computed by numerically approximating $\lambda_{\text{min}}(\widetilde{\mathcal{A}}_{\text{mg}}^{-1}\mathcal{A})$ 
($\mathcal{P}^{num}_2$-BPCG). In fact, we should set $a < \hat{\delta}$, see \cref{lemma_ipcg_conditions}. The results below, however, suggest that the solver is robust with respect to small variations of $a$. We look at the iteration counts necessary to reduce the Euclidean norm of the relative residual below~$10^{-6}$, i.e.~the numbers $n$ such that $\norm{\vecg{r}^{(n)}} = \norm{\vecg{b}-\mathcal{C}\vecg{z}^{(n)}} \leq 10^{-6} \norm{\vecg{b}}$. Here, $\vecg{z}^{(n)}$ denotes the $n$-th iterate. We use the zero vector as initial guess, i.e.~$\vecg{z}^{(0)}=\vecg{0}$.

At first, we want to assess the influence of the scaling factor $a$ introduced in \cref{sec_ev_analysis} on the convergence of the BPCG method. Therefore, we define the value $a^*$ such that $\mathcal{A}-a^* \widetilde{\mathcal{A}}_{\text{mg}}$ is bordering positive definiteness and solve the reference problem for different ratios of $a/a^*$. We calculate $a^*$ with MATLAB's (version 8.6.0) numerical eigensolver \texttt{eigs}, i.e.\ $a^* \approx \lambda_{\text{min}}(\widetilde{\mathcal{A}}_{\text{mg}}^{-1}\mathcal{A})$. Corresponding iteration counts for $\mathcal{P}_2^{num}$-BPCG are displayed in \cref{table_varying_scaling}. We note that the minimum iteration count of 27 is attained for $a/a^*=1$. The analytical bound according to \cref{lemma_eigenvalues_precon_diff}, which implies $a/a^*=0.48$, leads to an increased iteration count of 36.~This means that the analytical bound is too pessimistic in the sense of overfulfilling the positive definiteness requirement. For $a/a^*>1$, we can not theoretically guarantee that the algorithm converges. Still, we could not observe divergent behavior in our experiments. From the results in \cref{table_varying_scaling}, we deduce that the iteration counts are robust with respect to moderate variations around the optimal scaling $a = a^*$.
\begin{table}[tbhp]
\caption{Iteration counts for $\mathcal{P}^{num}_2$-BPCG using different values of the relative scaling 
$a/a^*$.~The boldfaced ratio corresponds to the analytical bound according to 
\cref{lemma_eigenvalues_precon_diff}.}
\label{table_varying_scaling}
\centering
\begin{tabular}{l c c c c c c c c c c c c c}
\toprule
  $a/a^*$ & 0.1 & 0.2 &  0.4 & $\boldsymbol{0.48}$ & 0.6 & 0.8 & 1.0 & 1.2 & 1.4 & 2.0 & 3  & 5  & 10 \\
  \midrule
  $n$     & 46  & 41  &  38  & 36                  & 34  & 30  & 27  & 27  & 30  & 32  & 38 & 45 & 58 \\ 
\bottomrule
\end{tabular}
\end{table}

In the following, we compare the performance of $\mathcal{P}_1$-MINRES, $\mathcal{P}^{ana}_2$-BPCG and $\mathcal{P}^{num}_2$-BPCG with respect to the variation of different problem parameters. To reduce the computational costs associated with numerically computing the scaling factor for $\mathcal{P}^{num}_2$-BPCG, we solve the associated eigenproblems on the coarsest mesh with mesh size $h=0.1$ in the following examples.~As $h$ does not appear in the corresponding eigenvalue bounds in \cref{eq_ev_bounds_SGFE_diff_approx_diff}, the scaling on the coarse mesh should be close to the scaling on the actual mesh. 

The iteration counts for different values of the mesh size $h$ are displayed in the top left plot of \cref{figure_variation_para}. The $\mathcal{P}_1$-MINRES solver needs the most iterations and the numerically scaled BPCG converges faster than the analytically scaled one. We observe that the iteration counts do not increase 
when refining the mesh but rather decrease moderately for all three methods. This matches the predictions made 
at the end of the \cref{subsec_is_minres,subsec_is_bpcg}, which suggested that the convergence does not degrade when the mesh is refined.~We are not aware of a theoretical explanation for the decrease in iteration counts. 

\begin{figure}[tbhp]
\centering
\subfloat{\includegraphics[width=0.39\textwidth]{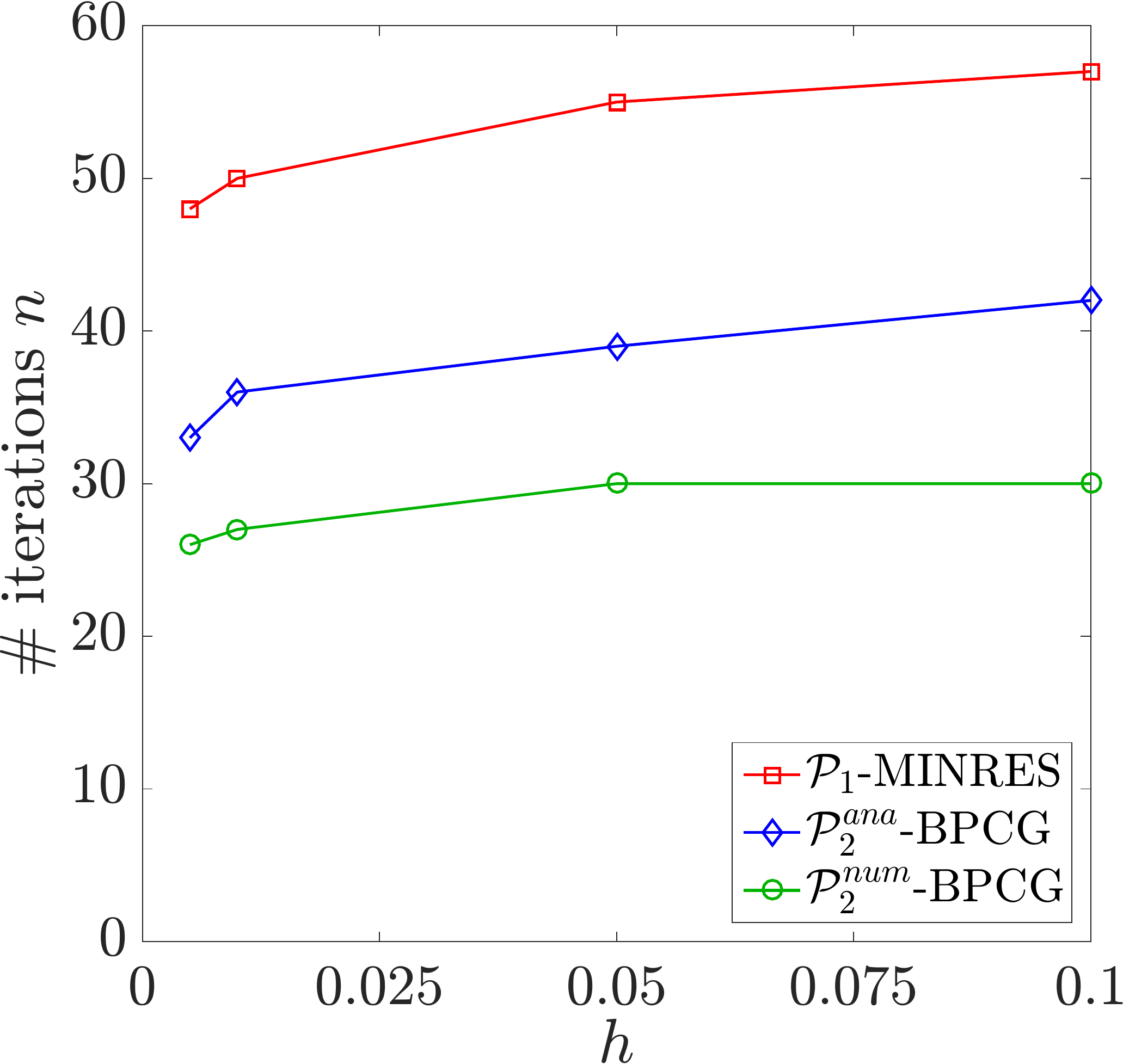}} \hspace{0.1\textwidth}
\subfloat{\includegraphics[width=0.39\textwidth]{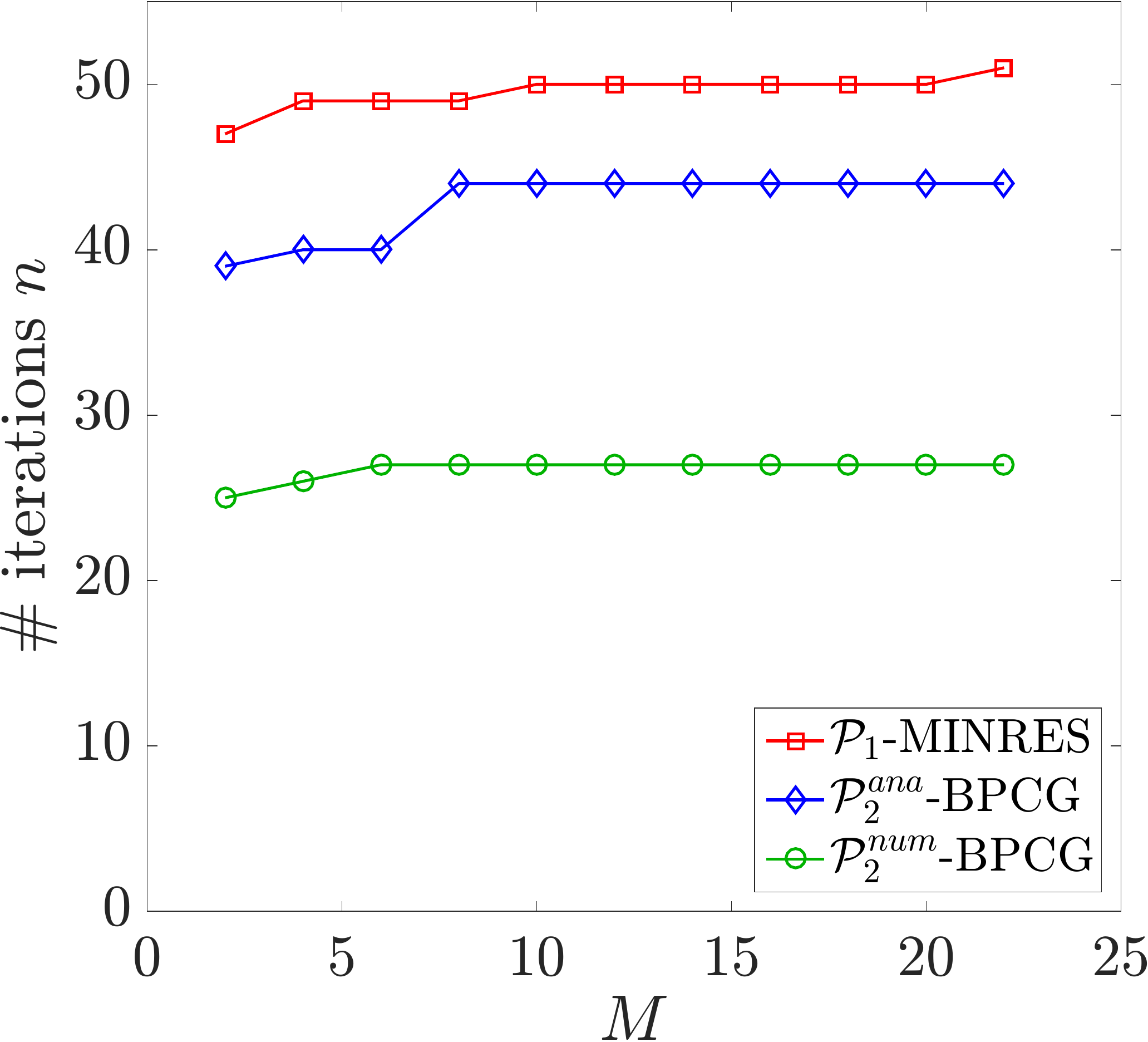}} \\
\subfloat{\includegraphics[width=0.39\textwidth]{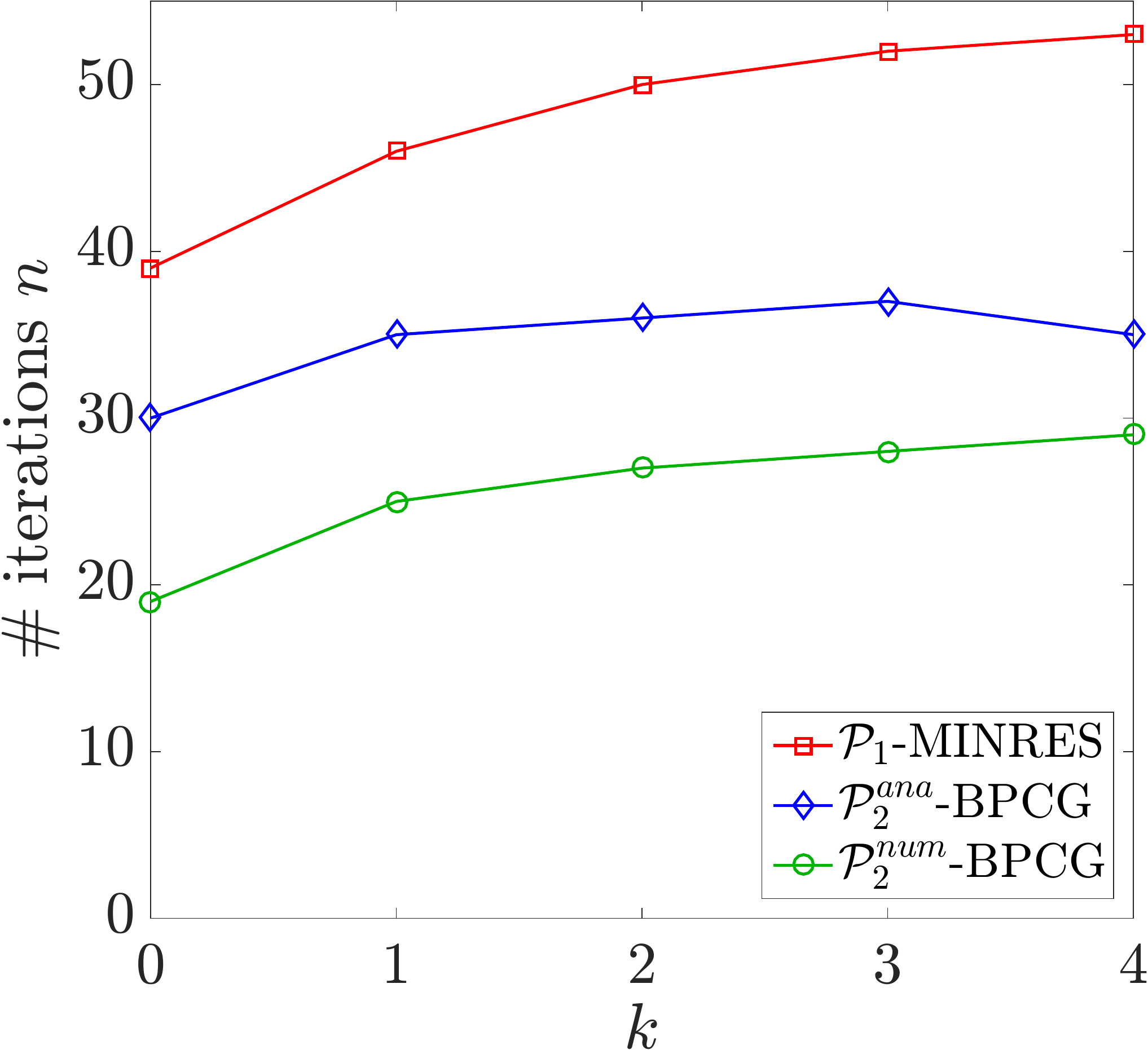}} \hspace{0.1\textwidth}
\subfloat{\includegraphics[width=0.39\textwidth]{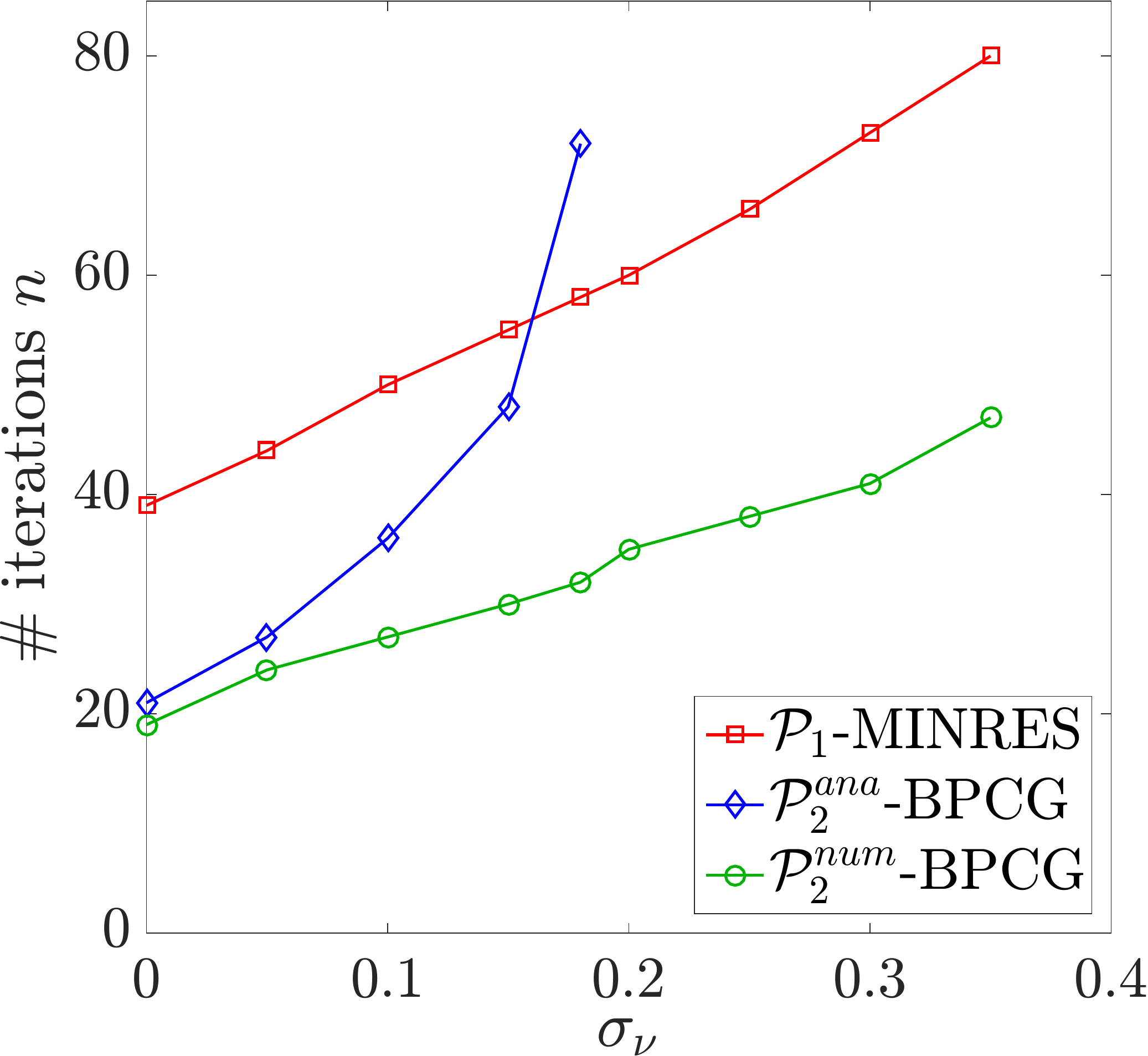}}
\caption{Iteration counts for $\mathcal{P}_1$-MINRES (red), $\mathcal{P}^{ana}_2$-BPCG 
(blue) and $\mathcal{P}^{num}_2$-BPCG (green) using different values of the mesh size $h$ (top left), the 
KLE truncation index $\nKL$ (top right), the degree of the polynomial chaos $\stochasticDegree$ (bottom left) and the standard deviation $\sigma_\nu$ (bottom right).}
\label{figure_variation_para}
\end{figure}

The influence of the KLE truncation index $\nKL$ on the convergence of the iterative solvers is displayed in the top right plot of \cref{figure_variation_para}.~When increasing $\nKL$, the iteration counts grow slightly up to a certain threshold and then basically stay constant for all three solvers. Asymptotic independence of $\nKL$ thus occurs as predicted. $\mathcal{P}_1$-MINRES again has the highest iteration counts followed by $\mathcal{P}^{ana}_2$-BPCG and $\mathcal{P}^{num}_2$-BPCG.

In \cref{figure_variation_para}, iteration counts for different values of the chaos degree 
$\stochasticDegree$ are displayed on the bottom left. For almost all considered chaos degrees, the three 
solvers exhibit a slight growth in iteration counts with increasing $k$. For $\mathcal{P}_1$-MINRES and 
$\mathcal{P}^{num}_2$-BPCG, this behavior can be observed up to the considered order where for 
$\mathcal{P}^{ana}_2$-BPCG, the iteration counts decrease at the end. 
The theory did not suggest a dependence on $\stochasticDegree$, so the asymptotic regime is apparently not 
reached yet. We can verify this by looking at the eigenvalues of the stochastic Galerkin matrices 
$G_\iKL$, $\iKL=1,\dots,\nKL$, for $\stochasticDegree=3$: $\lambda_{\text{max}}(G_\iKL) = 1.4915$, 
i.e.\ the eigenvalues do not immediately fill the whole possible range $[-\sqrt{3},\sqrt{3}]$. In fact, for 
$\stochasticDegree = 4$ we get the maximum eigenvalue $\lambda_{\text{max}}(G_\iKL) = 1.570$, which means that 
increasing the degree of the polynomial chaos leads to new, larger eigenvalues, thereby 
increasing the iteration count.~We suspect that the decrease in iteration counts for $\mathcal{P}^{ana}_2$-BPCG has the same cause: For increasing $k$, the analytical scaling improves as it relies on the theoretical bound $\pm\sqrt{3}$.


\Cref{figure_variation_para} (bottom right) contains results for different values of the standard 
deviation~$\sigma_\nu$. We observe that the iteration counts of all three methods noticeably increase with~$\sigma_\nu$. This is due to the fact that $\sigma_\nu$ directly influences the bounds in the \cref{lemma_eigenvalues_precon_diff,lemma_eigenvalues_precon_Schur}. For the analytically scaled $\mathcal{P}^{ana}_2$-BPCG, this influence is particularly pronounced and the algorithm converges merely up to $\sigma_\nu=0.18$. This is because the scaling factor $\hat{\delta}$ in \cref{lemma_eigenvalues_precon_diff} contains the term $\underline{\nu}_0 - \sqrt{3} \,\sigma_\nu \, \chi$ which is also the lower bound for the viscosity in \cref{eq_KL_estiamte}. In the numerical examples, this term becomes negative for $\sigma_\nu >0.18$. Consequently, we can not theoretically guarantee well-posedness of the discrete problem for $\sigma_\nu> 0.18$ as assumption \cref{eq_assumption_KL} no longer holds. Still, computing $\lambda_{\text{min}}(\widetilde{\mathcal{A}}_{\text{mg}}^{-1}\mathcal{A})$ numerically showed that the discrete problem is well-posed up to $\sigma_\nu \approx 0.35$, independent of the chaos degree. Consequently, we need a lower bound tighter than the one in \cref{eq_KL_estiamte} to make $\mathcal{P}^{ana}_2$-BPCG work for $\sigma>0.18$. Yet, for moderate values of $\sigma_\nu$, $\mathcal{P}_1$-MINRES needs the most iterations, followed by $\mathcal{P}^{ana}_2$-BPCG and $\mathcal{P}^{num}_2$-BPCG.


\section{Conclusion}
Based on the SGFE discretization of the Stokes model with random viscosity, we have investigated the efficient solution of the resulting system of equations by means of two suitable preconditioned iterative methods.~We have linked the convergence behavior of the solvers to the eigenvalue bounds of the respective relevant sub-matrices. Because the eigenvalue bounds of $\mathcal P_1^{-1}\mathcal C$ and $P_2^{-1}\mathcal C$ depend on the same 
parameters, we expect the iteration counts of $\mathcal P_1$-MINRES and the $\mathcal P_2$-BPCG methods to depend on these parameters, too. This has been confirmed by numerical experiments. In particular, we have observed the following:
\begin{itemize}[leftmargin=.15in]
\item The mesh size does not have a significant influence on the iteration counts. This is in line with the predictions and results from the use of the spectrally equivalent FE preconditioners.
\item The iteration numbers are asymptotically independent of the KLE truncation index. This is an effect of the conditions imposed on the parametrized random viscosity.
\item There is a slight increase of iteration counts when the discretization of the random parameter domain is refined. This has been explained by the observation that the eigenvalues do not immediately occupy the whole predicted interval, but fill it steadily as the chaos degree grows. Still, as the uniform random variables are bounded and appear linearly in the input, the condition of the SG matrices is bounded independently of the polynomial chaos degree.
\item The condition of the problem critically depends on the variance of the data in relation to its expectation. If the variance becomes too large, the problem even becomes ill-posed. This is reflected by the fact that the convergence deteriorates as the variance increases.  
\end{itemize}
The preconditioner used in the BPCG method contains a scaling to ensure positive definiteness of the matrix $\mathcal{H}$ and $\mathcal{H}$-positive definiteness of the preconditioned system matrix $\mathcal{P}_2^{-1} \mathcal{C}$. The optimal scaling parameter is given by the minimum eigenvalue of the preconditioned SGFE Laplacian, as confirmed by the numerical experiments. Still, the numerical costs associated with this eigenvalue problem are prohibitive.~We have derived an analytical value for the scaling based on knowledge of the parametrized random input. Numerical tests performed with this value resulted in a reduction of the iteration counts by up to a factor of about 1.5 compared to the reference preconditioned MINRES. The only regime where the analytically scaled BPCG performed worse is close to the analytically derived limit of positivity of the viscosity.~We also used the BPCG method with a numerically computed scaling. This approach converged faster than the analytically scaled solver and resulted in a reduction of iteration counts by up to a factor of 2 compared to the preconditioned MINRES method.

Concerning the eigenvalue analysis and the numerical tests, we summarize that the BPCG solver with block 
triangular preconditioner has the same qualitative properties as block diagonal preconditioned MINRES.~When a proper scaling parameter is available, the use of a BPCG solver can result in a noticeable reduction of iteration counts compared to the application of the parameter-free MINRES method with block diagonal preconditioner.~At the same time, applying the block triangular preconditioner is only marginally more expensive.

\bibliographystyle{siamplain}
\bibliography{references}

\begin{thebibliography}{10}

\bibitem{AshbyEtAl1990}
{\sc S.~F. Ashby, T.~A. Manteuffel, and P.~E. Saylor}, {\em A taxonomy for
  conjugate gradient methods}, SIAM J. Numer. Anal., 27 (1990), pp.~1542--1568,
  \url{https://doi.org/10.1137/0727091}.

\bibitem{AxelssonNeytcheva2003}
{\sc O.~Axelsson and M.~Neytcheva}, {\em Preconditioning methods for linear
  systems arising in constrained optimization problems}, Numer. Lin. Alg.
  Appl., 10 (2003), pp.~3--31, \url{https://doi.org/10.1002/nla.310}.

\bibitem{BabuskaEtAl2007}
{\sc I.~Babu\v{s}ka, F.~Nobile, and R.~Tempone}, {\em A stochastic collocation
  method for elliptic partial differential equations with random input data},
  SIAM J. Numer. Anal., 45 (2007), pp.~1005--1034,
  \url{https://doi.org/10.1137/050645142}.

\bibitem{BabushkaEtAl2004}
{\sc I.~Babu\v{s}ka, R.~Tempone, and G.~E. Zouraris}, {\em {G}alerkin finite
  element approximations of stochastic elliptic partial differential
  equations}, SIAM J. Numer. Anal., 42 (2004), pp.~800--825,
  \url{https://doi.org/10.1137/S0036142902418680}.

\bibitem{BenziEtAl2005}
{\sc M.~Benzi, G.~H. Golub, and J.~Liesen}, {\em Numerical solution of saddle
  point problems}, Acta Numer., 14 (2005), pp.~1--137,
  \url{https://doi.org/10.1017/S0962492904000212}.

\bibitem{BespalovEtAl2012}
{\sc A.~Bespalov, C.~E. Powell, and D.~Silvester}, {\em A priori error analysis
  of stochastic {G}alerkin mixed approximations of elliptic {PDE}s with random
  data}, SIAM J. Numer. Anal., 50 (2012), pp.~2039--2063,
  \url{https://doi.org/10.1137/110854898}.

\bibitem{BespalovSilvester2016}
{\sc A.~Bespalov and D.~Silvester}, {\em Efficient adaptive stochastic
  {G}alerkin methods for parametric operator equations}, SIAM J. Sci. Comput.,
  38 (2016), pp.~A2118--A2140, \url{https://doi.org/10.1137/15M1027048}.

\bibitem{BramblePasciak1988}
{\sc J.~H. Bramble and J.~E. Pasciak}, {\em A preconditioning technique for
  indefinite systems resulting from mixed approximations of elliptic problems},
  Math. Comp., 50 (1988), pp.~1--17,
  \url{https://doi.org/10.1090/S0025-5718-1988-0917816-8}.

\bibitem{BrezziFortin1991}
{\sc F.~Brezzi and M.~Fortin}, {\em Mixed and Hybrid Finite Element Methods},
  Springer, New York, 1991.

\bibitem{EigelEtAl2014}
{\sc M.~Eigel, C.~J. Gittelson, C.~Schwab, and E.~Zander}, {\em Adaptive
  stochastic {G}alerkin {FEM}}, Comput. Methods Appl. Mech. Engrg., 270 (2014),
  pp.~247--269, \url{https://doi.org/10.1016/j.cma.2013.11.015}.

\bibitem{ElmanEtAl2014b}
{\sc H.~C. Elman, A.~Ramage, and D.~J. Silvester}, {\em {IFISS}: a
  computational laboratory for investigating incompressible flow problems},
  SIAM Rev., 56 (2014), pp.~261--273, \url{https://doi.org/10.1137/120891393}.

\bibitem{ElmanEtAl2014}
{\sc H.~C. Elman, D.~J. Silvester, and A.~J. Wathen}, {\em Finite Elements and
  Fast Iterative Solvers: with Applications in Incompressible Fluid Dynamics},
  Oxford Univ. Press, Oxford, 2nd~ed., 2014.

\bibitem{ErnstEtAl2009}
{\sc O.~G. Ernst, C.~E. Powell, D.~J. Silvester, and E.~Ullmann}, {\em
  Efficient solvers for a linear stochastic {G}alerkin mixed formulation of
  diffusion problems with random data}, SIAM J. Sci. Comput., 31 (2009),
  pp.~1424--1447, \url{https://doi.org/10.1137/070705817}.

\bibitem{FaberManteuffel1984}
{\sc V.~Faber and T.~Manteuffel}, {\em Necessary and sufficient conditions for
  the existence of a conjugate gradient method}, SIAM J. Numer. Anal., 21
  (1984), pp.~352--362, \url{https://doi.org/10.1137/0721026}.

\bibitem{GhanemSpanos1991}
{\sc R.~G. Ghanem and P.~D. Spanos}, {\em Stochastic Finite Elements:~A
  Spectral Approach}, Springer, New York, 1991.

\bibitem{GunzburgerEtAl2014}
{\sc M.~D. Gunzburger, C.~G. Webster, and G.~Zhang}, {\em Stochastic finite
  element methods for partial differential equations with random input data},
  Acta Numer., 23 (2014), pp.~521--650,
  \url{https://doi.org/10.1017/S0962492914000075}.

\bibitem{Hackbusch1994}
{\sc W.~Hackbusch}, {\em Iterative Solution of Large Sparse Systems of
  Equations}, Springer, New York, 1994.

\bibitem{John2016}
{\sc V.~John}, {\em Finite Element Methods for Incompressible Flow Problems},
  Springer, Cham, Switzerland, 2016.

\bibitem{LordEtAl2014}
{\sc G.~J. Lord, C.~E. Powell, and T.~Shardlow}, {\em An Introduction to
  Computational Stochastic PDEs}, Cambridge University Press, New York, 2014.

\bibitem{MarshallOlkin1979}
{\sc A.~W. Marshall and I.~Olkin}, {\em Inequalities: {T}heory of Majorization
  and Its Applications}, Academic Press, New York, 1st~ed., 1979.

\bibitem{Mueller2018}
{\sc C.~M\"uller}, {\em Iterative solvers for stochastic Galerkin
  discretizations of Stokes flow with random data}, PhD thesis, TU Darmstadt,
  2018.

\bibitem{MuellerEtAl2017}
{\sc C.~M\"uller, S.~Ullmann, and J.~Lang}, {\em A {B}ramble-{P}asciak
  conjugate gradient method for discrete {S}tokes problems with lognormal
  random viscosity}, in Recent Advances in Computational Engineering, ICCE
  2017, M.~Sch\"afer, M.~Behr, M.~Mehl, and B.~Wohlmuth, eds., vol.~124 of
  Lect. Notes Comput. Sci. Eng., Cham, 2018, Springer,
  \url{https://doi.org/10.1007/978-3-319-93891-2_5}.

\bibitem{Oksendal1998}
{\sc B.~Oksendal}, {\em Stochastic Differential Equations:~An Introduction With
  Applications}, Springer, Berlin, 5th~ed., 1998.

\bibitem{PaigeSaunders1975}
{\sc C.~C. Paige and M.~A. Saunders}, {\em Solution of sparse indefinite
  systems of linear equations}, SIAM J. Numer. Anal., 12 (1975), pp.~617--629,
  \url{https://doi.org/10.1137/0712047}.

\bibitem{PetersEtAl2005}
{\sc J.~Peters, V.~Reichelt, and A.~Reusken}, {\em Fast iterative solvers for
  discrete {S}tokes equations}, SIAM J. Sci. Comput., 27 (2005), pp.~646--666,
  \url{https://doi.org/10.1137/040606028}.

\bibitem{PowellElman2009}
{\sc C.~E. Powell and H.~C. Elman}, {\em Block-diagonal preconditioning for
  spectral stochastic finite-element systems}, IMA J. Numer. Anal., 29 (2009),
  pp.~350--375, \url{https://doi.org/10.1093/imanum/drn014}.

\bibitem{PowellSilvester2012}
{\sc C.~E. Powell and D.~J. Silvester}, {\em Preconditioning steady-state
  {N}avier--{S}tokes equations with random data}, SIAM J. Sci. Comput., 34
  (2012), pp.~A2482--A2506, \url{https://doi.org/10.1137/120870578}.

\bibitem{Pultarova2015}
{\sc I.~Pultarov{\'a}}, {\em Adaptive algorithm for stochastic {G}alerkin
  method}, Appl. Math., 60 (2015), pp.~551--571,
  \url{https://doi.org/10.1007/s10492-015-0111-9}.

\bibitem{RosseelVandewalle2010}
{\sc E.~Rosseel and S.~Vandewalle}, {\em Iterative solvers for the stochastic
  finite element method}, SIAM J. Sci. Comput., 32 (2010), pp.~372--397,
  \url{https://doi.org/10.1137/080727026}.

\bibitem{SaadSchultz1986}
{\sc Y.~Saad and M.~H. Schultz}, {\em {GMRES}: A generalized minimal residual
  algorithm for solving nonsymmetric linear systems}, SIAM J. Sci. and Stat.
  Comput., 7 (1986), pp.~856--869, \url{https://doi.org/10.1137/0907058}.

\bibitem{SilvesterWathen1994}
{\sc D.~Silvester and A.~Wathen}, {\em Fast iterative solution of stabilised
  {S}tokes systems part {II}: using general block preconditioners}, SIAM J.
  Numer. Anal., 31 (1994), pp.~1352--1367,
  \url{https://doi.org/10.1137/0731070}.

\bibitem{EUllmann2010}
{\sc E.~Ullmann}, {\em A {K}ronecker product preconditioner for stochastic
  {G}alerkin finite element discretizations}, SIAM J. Sci. Comput., 32 (2010),
  pp.~923--946, \url{https://doi.org/10.1137/080742853}.

\bibitem{Ullmann2016}
{\sc S.~Ullmann}, {\em Triangular {T}aylor {H}ood finite elements}, 2017,
  \url{https://www.mathworks.com/matlabcentral/fileexchange/49169}.
\newblock Version 1.4.

\bibitem{VanDerVorst1992}
{\sc H.~A. van~der Vorst}, {\em Bi-{CGSTAB}: A fast and smoothly converging
  variant of {B}i-{CG} for the solution of nonsymmetric linear systems}, SIAM
  J. Sci. and Stat. Comput., 13 (1992), pp.~631--644,
  \url{https://doi.org/10.1137/0913035}.

\bibitem{Wathen1991}
{\sc A.~J. Wathen}, {\em On relaxation of {J}acobi iteration for consistent and
  generalized mass matrices}, Commun. Appl. Numer. Methods, 7 (1991),
  pp.~93--102, \url{https://doi.org/10.1002/cnm.1630070203}.

\bibitem{XiuKarniadakis2002b}
{\sc D.~Xiu and G.~E. Karniadakis}, {\em The {W}iener--{A}skey polynomial chaos
  for stochastic differential equations}, SIAM J. Sci. Comput., 24 (2002),
  pp.~619--644, \url{https://doi.org/10.1137/S1064827501387826}.

\bibitem{Zulehner2001}
{\sc W.~Zulehner}, {\em Analysis of iterative methods for saddle point
  problems: a unified approach}, Math. Comp., 71 (2001), pp.~479--505,
  \url{http://www.jstor.org/stable/2698830}.

\end{thebibliography}
\end{document}